\newtheorem{theo}{Theorem}[section]
\newtheorem{lem}[theo]{Lemma}
\newtheorem{prop}[theo]{Proposition}
\newtheorem{cor}[theo]{Corollary}
\newtheorem{defin}[theo]{Definition}
\theoremstyle{definition}
\newtheorem{rem}[theo]{Remark}
\newcommand{\PP}{\mathbb{P}}
\newcommand{\ZZ}{\mathbb{Z}}
\newcommand{\QQ}{\mathbb{Q}}
\newcommand{\CC}{\mathbb{C}}
\title{New Fourfolds from F-Theory }
\author{Gilberto Bini and Matteo Penegini}
\begin{document}


\maketitle


\begin{abstract}
In this paper, we apply Borcea-Voisin's construction and give new examples of fourfolds containing a del Pezzo surface of degree six, which admit an elliptic fibration on a smooth threefold. Some of these fourfolds are Calabi-Yau varieties, which are relevant for the $N=1$ compactification of Type IIB string theory known as $F$-Theory. As a by-product, we provide a new example of a Calabi--Yau threefold with Hodge numbers $h^{1,1}=h^{2,1}=10$.
\end{abstract}

\Footnotetext{{}}{\textit{2010 Mathematics Subject
Classification}: 14J32, 14J35, 14J50}

\Footnotetext{{}} {\textit{Keywords}: {Calabi--Yau Manifolds, Elliptic Fibration, del Pezzo Surfaces, F-Theory}}

\Footnotetext{{}} {\textit{Version:} June 24, 2015 }

\section{Introduction}

$F$-theory provides a geometric realization of strongly coupled Type IIB string theory backgrounds. It is motivated by potential applications for building models from string theory, see e.g., \cite{BHV08I, BHV08II}.  We are interested in some of the mathematical questions posed by $F$-theory - above all - the construction of some of these models. For us, $F$-theory  will be of the form $\mathbb{R}^{3,1} \times X$, where $X$ is a Calabi-Yau fourfold admitting an elliptic fibration with a section on a complex threefold $B$, namely:
  \[
\begin{xy}
\xymatrix{
E  \ar@^{(->}[r] & X \ar^{f}[d] \\
& B.
 }
\end{xy}
\]

In general, the elliptic fibres $E$ of $X$ degenerate over a locus contained in a complex codimension one sublocus $D$ of $B$. Due to theoretical speculation in Physics, $D$ should be preferably (weak) del Pezzo surfaces: see, for instance, \cite{BHV08I}.

The aim of this work is to investigate explicit examples of elliptically fibered fourfolds $X$. At the end of the paper we will give an example of a fourfold containing a del Pezzo surface. This fourfold will be elliptically fibred and the degeneration locus of the fibration is a union of weak del Pezzo surfaces.

It seems rather natural to us  to build these models by using a generalized Borcea-Voisin's construction: see, for example, \cite{V93, Bo97, D12, CG13}. In fact, we proceed as follows. First, we provide particular examples of singular Calabi-Yau threefolds $Y^{\prime}$ containing a del Pezzo surface $\tilde{D}_6$ of degree 6 -- following the general construction due to G. Kapustka \cite{K}. Next, we resolve the singularities of $Y^{\prime}$ in such a way that we obtain smooth Calabi-Yau threefolds $Y$ containing a copy of $\tilde{D}_6$ and having a non-trivial automorphism group. One of these examples is a {\it new} simply connected Calabi-Yau threefold with Hodge numbers $(h^{1,1}, h^{2,1})=(10,10)$. More specifically, we give examples of threefolds $Y_j$ with automorphisms of order $j \in \{3,6\}$. Denote these automorphisms by $\rho$ and $\sigma$, respectively.

Next, we consider elliptic curves $E_i$ (with $i \in \{2,3\}$) admitting finite automorphisms $\iota_E$ and  $\rho_E$ of order $2$ and $3$, respectively.  Afterwards, let us take the product $Z_{j,i}:=Y_j \times E_i$.  We prove that there exists a birational model of the fourfold $Z_{6,2}/(\ZZ/2\ZZ)$, which is a singular Calabi-Yau having a singular locus made up of $36$ singular points of type $\frac{1}{2}(1,1,1,1)$. Finally, there exists a resolution $X$ of $Z_{3,3}/(\ZZ/3\ZZ)$, which yields an elliptically fibered Calabi-Yau fourfold containing a del Pezzo surface of degree six.

The construction can be summarized in the following diagram:
\[
\begin{xy}
\xymatrix{
D_6 \ar@^{(->}[r] \ar[dd]   & \PP^6 \ar[d] & & Z:=Y \times E \ar[d] \ar_{pr_1}[dl] & X \ar[dl] \ar^{f}[dd]
\\& \PP^5 & Y \ar[dl] & Z/G
\\
\tilde{D}_6 \ar@^{(->}[r] \ar@^{(->}[ur] & Y^{\prime} & & & B \stackrel{bir}{\sim} Y/G
  }
\end{xy}
\]

The main result of this work is the following theorem.

\begin{theo}\label{theo:main} The following hold:
\begin{itemize}

\item The  quotient $(Y_3 \times E_3)/(\rho \times \rho_E)$ admits a birational model $X_1$, which is a smooth Calabi--Yau fourfold having an elliptic fibration and containing a del Pezzo surface of degree six.  Moreover, the degeneration locus of the fibration is a union of $9$ weak del Pezzo surfaces.

\item The quotient $(Y_6 \times E_2)/(\sigma^3 \times \iota_E)$ is a singular Calabi-Yau having a singular locus made up of  $36$ singular points of type $\frac{1}{2}(1,1,1,1)$. Moreover, it contains a del Pezzo surface of degree six.

\end{itemize}
\end{theo}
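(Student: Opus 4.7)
The approach is to realise both claims as instances of the Borcea--Voisin construction with the automorphisms supplied earlier, handle the Calabi--Yau check as a character computation, and then determine the fixed locus of the diagonal automorphism on $Y_j\times E_i$ in order to read off the singularities of the quotient. Since the fixed locus of a product automorphism splits as a product of fixed loci, the central task in each case is to identify $\mathrm{Fix}(\rho)$ on $Y_3$ and $\mathrm{Fix}(\sigma^3)$ on $Y_6$ explicitly from the Kapustka construction, together with the local representation of the action on the tangent space at each fixed stratum.

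For the first item, I would begin by showing that $\rho\times\rho_E$ preserves the nowhere vanishing holomorphic $4$-form $\omega_{Y_3}\wedge\omega_{E_3}$; this reduces to checking that the characters of the two factors are inverse roots of unity, which follows from the construction of $Y_3$ as a Calabi--Yau quotient and from the standard normalisation of $\rho_E$ on $E_3$. Since $\rho_E$ has exactly three fixed points, the fixed locus on $Z_{3,3}$ consists of three disjoint copies of $\mathrm{Fix}(\rho)_{Y_3}$, which I expect to be a disjoint union of smooth surfaces traceable to the exceptional divisors in Kapustka's resolution. The quotient $Z_{3,3}/\langle\rho\times\rho_E\rangle$ then carries singularities of type $\tfrac{1}{3}(1,1,1,1)$ transversally to codimension two strata, and an equivariant blow-up along the fixed strata should produce a smooth crepant birational model $X_1$. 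For the fibration structure I would descend $pr_1$ to a morphism $X_1\to Y_3/\langle\rho\rangle=:B$; the generic fibre is $E_3$, whereas over the image of each component of $\mathrm{Fix}(\rho)$ the action of $\rho_E$ collapses the fibre, so the fibration degenerates precisely over the branch locus of $Y_3\to Y_3/\langle\rho\rangle$. I would then identify the nine components of this branch locus with weak del Pezzo surfaces coming from the exceptional configuration of the Kapustka model, and transport $\tilde{D}_6\subset Y_3$ through the quotient and resolution to obtain the embedded del Pezzo surface of degree six.

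For the second item, $\sigma^3$ is an involution on $Y_6$, and the Calabi--Yau check for the diagonal action $\sigma^3\times\iota_E$ is again a character computation. The structural claim is that $\mathrm{Fix}(\sigma^3)_{Y_6}$ consists of exactly nine isolated points at which $\sigma^3$ acts with local weights $(-1,-1,-1)$, and has no positive-dimensional components; combined with the four $\iota_E$-fixed points on $E_2$, this yields precisely $9\cdot 4=36$ isolated fixed points on $Y_6\times E_2$ with local weights $(-1,-1,-1,-1)$, hence the claimed singularities of type $\tfrac{1}{2}(1,1,1,1)$ on the quotient with no further singular strata. The del Pezzo surface is again read off from the $\sigma^3$-invariant copy of $\tilde{D}_6$ inside $Y_6$. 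The main obstacle in both parts is the explicit fixed-locus analysis on the Kapustka threefolds: in (i) the delicate point is to match the nine components of $\mathrm{Fix}(\rho)$ with weak del Pezzo surfaces in $B$ and to verify that the equivariant blow-up is crepant, whereas in (ii) the key check is the absence of any fixed curves of $\sigma^3$, without which the singular locus of the quotient would contain unwanted positive-dimensional strata rather than exactly $36$ isolated quotient singularities.
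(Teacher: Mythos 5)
Your overall frame (Borcea--Voisin quotient, character check on the holomorphic form, fixed-locus analysis) matches the paper's, and the character computations are fine; but both of your structural claims about the fixed loci are false, and these are exactly where the content of the theorem lies. For the first item, the fixed locus of $\rho$ on $Y_3$ is \emph{not} a union of surfaces: by Remark \ref{rem_fixZ3} it consists of $9$ isolated points (three of them on $\tilde{D}_6$), so the quotient $(Y_3\times E_3)/(\rho\times\rho_E)$ has $27$ isolated singular points of type $\frac{1}{3}(1,1,2,2)$, not codimension-two strata of transversal type $\frac{1}{3}(1,1,1,1)$. These points have Reid--Tai age $2$, hence are terminal, so your proposed ``equivariant blow-up along the fixed strata'' can never be crepant: every exceptional divisor over a terminal point has strictly positive discrepancy, so no smooth Calabi--Yau model can be reached by divisorial blow-ups. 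This is precisely why the paper instead constructs an explicit \emph{small} resolution (computing the invariants of $\frac{1}{3}(1,1,2,2)$, mapping to a weighted projective space, projecting to $\mathbb{P}^8$ and blowing up a $\mathbb{P}^5$ there, so that only a conic sits over each singular point); smallness is what keeps the canonical bundle trivial. Your description of the base is wrong for the same reason: the branch locus of $Y_3\to Y_3/\langle\rho\rangle$ consists of $9$ points, not $9$ surfaces, so it cannot be ``identified with weak del Pezzo surfaces.'' In the paper the $9$ weak del Pezzos are the exceptional divisors ${\mathbb W}{\mathbb P}^2(1,1,2)$ (birational to ${\mathbb F}_2$) of a resolution $B\to Y_3/\langle\rho\rangle$ of those $9$ singular points, and promoting the descended projection to an honest elliptic fibration $X_1\to B$ (rather than a fibration over the singular quotient) costs the paper two further rounds of blow-ups along elliptic components and elliptic curves $\Lambda$ in the singular fibers.

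For the second item you make the opposite error: by Remark \ref{rem:fixlocusz/2}, the fixed locus of $\sigma^3$ on $Y^{\prime}$ (hence on $Y_6$) is a $\mathbb{P}^2$ \emph{together with} $9$ points, so your ``structural claim'' that there are no positive-dimensional components is false. The fixed $\mathbb{P}^2$ times the four Weierstrass points of $E_2$ produces four codimension-two singular surfaces in the quotient, with local action $(1,1,-1,-1)$; by Reid--Tai these admit a crepant resolution, and only after performing it does one obtain the birational model $X_2$ whose singular locus is exactly the $36$ points of type $\frac{1}{2}(1,1,1,1)$ (your count $9\cdot 4=36$ for the isolated part is correct). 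As written, your argument proves the wrong statement: the raw quotient has codimension-two singular strata that your proof does not see, and the theorem is really an assertion about this partially resolved birational model.
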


As a by-product, we obtain that the threefold $Y_6$ has the following remarkable property: its Hodge numbers are $(h^{1,1}(Y_6), h^{2,1}(Y_6))=(10,10)$ and is simply connected.

Our constructions are very explicit: we give equations of the varieties we are dealing with. Our strategy is very direct; indeed, we intend this work as a first \emph{experiment} towards the understanding of other new and interesting examples inspired by $F$-theory. In a forthcoming paper, we intend to investigate the moduli spaces of the varieties described here.

The paper is organized as follows. In Section 2, we recall the definition and some properties of del Pezzo surfaces. In Section 3, we give equations for Calabi-Yau varieties in ${\mathbb P}^5$, which contain a del Pezzo surface of degree six. In Sections 4 and Section 5, we investigate in details Calabi-Yau varieties containing a del Pezzo surface with a non-trivial automorphism group. To be more precise, Section 4 is devoted to the construction of $Y_6$ and Section 5 to that of $Y_3$. Finally, in the last section we prove Theorem \ref{theo:main} in two steps, i.e., in Proposition \ref{prop:y3e3z3} and Proposition \ref{prop:y6e2}.

Within the paper we make use of the computer algebra program \verb|MAGMA|. You can find the scripts at the following web page:

\begin{verbatim}
users.mat.unimi.it/users/penegini/publications/AllinOneProg_v8.txt
\end{verbatim}

 \bigskip
\textbf{Acknowledgements.}  The authors would like to thank Sergio Cacciatori for suggesting this problem and Bert van Geemen for pointing out a mistake in an early draft. Moreover, they thank Alberto Alzati, Francesco Dalla Piazza, Igor Dolgachev, Alice Garbagnati, Francesco Polizzi, and Matthias Sch\"utt for useful conversations and suggestions. Finally, we would like to thank the referee for helpful remarks on an early version of the manuscript.

Both authors are partially supported by Progetto MIUR di Rilevante Interesse Nazionale \emph{Geometria
delle Variet$\grave{a}$ Algebriche e loro Spazi di Moduli} and by a DAAD-VIGONI grant. They are also grateful to the Hausdorff Research Institute for Mathematics (Bonn) for the invitation and hospitality during the Junior Trimester Program \emph{Algebraic Geometry}. Finally, the first author was partially funded by FIRB 2012 "Moduli Spaces and Applications" and by the FCT project EXPL/MAT-GEO/1168/2013: Geometry of moduli spaces of curves and abelian varieties.

\bigskip
\textbf{Notation and conventions.}
We work only over the field of complex numbers $\mathbb{C}$. By $\zeta_n$ we denote a primitive $n$-th root of unity.


\section{On del Pezzo surfaces of degree $6$ and its automorphisms}\label{sec_delPezzo}


In this section, we briefly recall the definition and some properties of del Pezzo surfaces.

\begin{defin}
A \emph{del Pezzo surface} is a smooth surface, whose anti-canonical bundle is ample. A \emph{weak del Pezzo surface}  is a surface (possibly singular), whose anti-canonical bundle is big and nef.
\end{defin}

 It is well-known, see, e.g., \cite[Chapter 8]{D13}, that the degree $K_S^2$ of a
 del Pezzo surface $S$ is less than or equal to $9$. Moreover, any del Pezzo surface can be obtained from $\PP^2$ by blowing up $r$ ($r<9$) points in general position on it, where $d=9-r$. We obtain a surface $Bl(\PP^2)$ with ample anti-canonical divisor. For $d \geq 3$, the anti-canonical embedding $\phi_{|-K|}: Bl(\PP^2) {\hookrightarrow} D_{9-r} \subset \PP^{9-r}$ gives a del Pezzo
 surface in projective space. Finally, we notice that for some values of $d$ a del Pezzo surface can be defined as a nondegenerate surface of degree $d$ in $d$-dimensional complex projective space, which is not a cone and is not isomorphic to a surface of degree $d$ in $(d+1)$-dimensional projective space.

Let us take into account the del Pezzo surface of degree $6$. Thus, we choose $3$ points in general position on $\PP^2$, namely $p_1:=[1:0:0], p_2:=[0:1:0],p_3:= [0:0:1]$. Consider the embedding of $\PP^2$ in  $\PP^6$ through a system of cubics passing through the points $p_1,p_2,p_3$, namely:

\[ \PP^2_{[x_0:x_1:x_2]} \longrightarrow \PP^6_{[u_0:u_1:u_2:u_3:u_4:u_5:u_6]}
\]
\[ [x_0:x_1:x_2] \mapsto [x_0(x_1)^2:x_0(x_2)^2:(x_1)^2x_2:(x_0)^2x_1:(x_2)^2x_1:(x_0)^2x_2:x_0x_1x_2]
\]

The surface $D_6$ is the image of the del Pezzo surface of degree $6$ in $\PP^6$: see \cite[Theorem 8.4.1]{D13}. The automorphism group of $D_6$ is also well-known; indeed, the following holds
\begin{theo}{\rm\cite[Theorem 8.4.2]{D13}}
Let $D_6$ be a del Pezzo surface of degree $6$. Then
\begin{equation}
{\rm Aut}(D_6) \cong (\mathbb{C}^*)^2 \rtimes \left(\mathfrak{S}_3 \times \ZZ/2\ZZ \right).
\end{equation}

If we represent the torus as the quotient group of $(\mathbb{C}^*)^3$ by the diagonal subgroup $\Delta \cong \mathbb{C}^*$, then the subgroup $\mathfrak{S}_3$ acts by permutations of factors, and the cyclic subgroup $\ZZ/2\ZZ$ acts by the inversion automorphism $z \mapsto z^{-1}$: this automorphism is usually called the \emph{Cremona involution} of $D_6$.
\end{theo}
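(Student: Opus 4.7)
The plan is to exploit the classical realization of $D_6$ as a toric surface and to analyze $\Aut(D_6)$ through its action on the configuration of $(-1)$-curves.

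First I would identify $D_6$ with the blow-up of $\PP^2$ at the three coordinate points $p_1,p_2,p_3$ (equivalently, with the smooth toric compactification of $(\CC^*)^2\subset\PP^2$ whose fan is the hexagonal one). This immediately gives an inclusion $(\CC^*)^2\hookrightarrow\Aut(D_6)$: the diagonal torus acting on $\PP^2$ fixes each $p_i$, so it lifts to the blow-up. The six $(-1)$-curves on $D_6$ are the exceptional divisors $E_1,E_2,E_3$ together with the strict transforms $L_{12},L_{23},L_{13}$ of the lines through pairs of $p_i$; they form a hexagon (with $E_i\cdot L_{ij}=E_i\cdot L_{ik}=1$ and all other pairwise intersections zero) and coincide with the torus-invariant boundary divisors.

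Next, I would compute the connected component $\Aut(D_6)^{\circ}$. Because $D_6$ has only finitely many $(-1)$-curves, $\Aut(D_6)^{\circ}$ stabilizes each of them; contracting the three disjoint curves $E_1,E_2,E_3$ recovers $\PP^2$, so $\Aut(D_6)^{\circ}$ embeds in the subgroup of $\PGL_3$ fixing $p_1,p_2,p_3$, which is precisely the diagonal torus. Combined with the inclusion above, this yields $\Aut(D_6)^{\circ}\cong(\CC^*)^2$.

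Third, I would study the group of components $F:=\Aut(D_6)/\Aut(D_6)^{\circ}$. It acts faithfully on the hexagon of $(-1)$-curves (faithfulness uses that the stabilizer of the whole hexagon is already $(\CC^*)^2$), so $F$ embeds in the symmetry group of the hexagon, namely the dihedral group of order $12$. To show the inclusion is an equality I would exhibit enough automorphisms: (i) the subgroup $\mathfrak{S}_3\subset\PGL_3$ permuting the coordinates of $\PP^2$ permutes $\{p_1,p_2,p_3\}$ and lifts to $D_6$, realising the $\mathfrak{S}_3$ acting on the triangle $\{E_1,E_2,E_3\}$; (ii) the standard Cremona involution $[x_0{:}x_1{:}x_2]\dashrightarrow[x_1x_2{:}x_0x_2{:}x_0x_1]$ has indeterminacy locus exactly $\{p_1,p_2,p_3\}$ and becomes biregular on $D_6$, exchanging $E_i$ with $L_{jk}$. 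These correspond respectively to the symmetries of the triangle of even vertices of the hexagon and to the rotation by $\pi$ (the antipodal map on the hexagon); since the latter is central, together they generate the full dihedral group and factor as $\mathfrak{S}_3\times\ZZ/2\ZZ$.

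Finally, the subgroup generated in (i) and (ii) provides a section of $\Aut(D_6)\twoheadrightarrow F$, giving the semidirect product structure; writing the torus as $(\CC^*)^3/\Delta$ one checks from the explicit $\PGL_3$-action that $\mathfrak{S}_3$ permutes the three $\CC^*$ factors and the Cremona involution inverts each factor. I expect the main technical point to be the biregularity on $D_6$ of the Cremona involution (verified by a local blow-up computation at each $p_i$) and the slightly subtle step of ruling out any larger connected automorphism group, which is handled by the contraction argument above that reduces to centralisers in $\PGL_3$ of three non-collinear points.
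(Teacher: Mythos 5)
Your proposal is correct, but note that the paper itself gives no proof of this statement: it is quoted verbatim from Dolgachev \cite[Theorem 8.4.2]{D13}, so there is no argument in the paper to compare against. Your argument --- realizing $D_6$ as the toric blow-up of $\PP^2$ at the three coordinate points, identifying $\Aut(D_6)^{\circ}$ with the diagonal torus via the contraction of $E_1,E_2,E_3$, embedding the component group into the dihedral symmetry group of the hexagon of $(-1)$-curves, and then realizing all twelve symmetries by lifts of coordinate permutations together with the standard quadratic Cremona involution --- is the standard classical proof, and is essentially the one given in Dolgachev's book. The only points worth tightening are the ones you flag yourself: the biregularity of the Cremona map on the blow-up (its resolution is exactly $\mathrm{Bl}_3\PP^2$, which contracts the strict transforms of the lines $L_{jk}$ back to the points $p_i$), and the fact that the subgroup generated by the permutation lifts and the Cremona involution meets the torus trivially (this follows since these elements commute, giving a group of order $12$ mapping isomorphically onto the component group), which is what makes it a genuine section and yields the semidirect product structure.
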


The action of $\mathfrak{S}_3$ on $D_6$ can be explicitly given and can be implemented in a \verb|MAGMA| script.


\section{Calabi--Yau threefolds containing a del Pezzo surface of degree $6$}


By definition, we recall that a \emph{Calabi--Yau manifold} $X$ is a smooth compact K\"ahler manifold of dimension $n$ such that
\begin{enumerate}

\item $\mathcal{O}_X (K_X) \cong \mathcal{O}_X$,

\item $h^{i,0}(X)=0$   for $0< i<n$.

\end{enumerate}

Notice that by Serre duality $h^{n,0}(X)=h^{0,0}(X)=1$.

We follow \cite{K}, and we give an example of  a Calabi--Yau threefold, which contains a del Pezzo surface of degree six. The construction will be very explicit.

Let $\tilde{D}_6 \subset \PP^5_{[v_0:v_1:v_2:v_3:v_4:v_5]}$ be the projection of $D_6 \subset \PP^6$ from the point with coordinates  $[0:0:0:0:0:1]$, which is not contained in the secant variety of $D_6$. Since the point is not contained in the secant variety, the projection $\tilde{D}_6$ of $D_6$ in five dimensional projective space is isomorphic to $D_6$, hence smooth.

\begin{prop}\label{prop_7cubics}
The ideal of $\tilde{D}_6 \subset  \PP^5$ is generated by $2$ quadrics and $7$ cubics.
\end{prop}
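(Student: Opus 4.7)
The plan is to combine a Hilbert-polynomial count with an explicit search for generators coming from the cubic parametrization of $\tilde{D}_6$. Since the projection from $[0:\cdots:0:1]$ is an isomorphism onto its image, $\tilde{D}_6 \cong D_6$ and the hyperplane class on $\tilde{D}_6$ is still $-K$. By Riemann-Roch on a surface with $K_S^2 = 6$ and $\chi(\mathcal{O}_S) = 1$, the Hilbert polynomial is $P(t) = 3t^2 + 3t + 1$, so $P(2) = 19$ and $P(3) = 37$. Comparing with $\dim H^0(\mathbb{P}^5, \mathcal{O}(2)) = 21$ and $\dim H^0(\mathbb{P}^5, \mathcal{O}(3)) = 56$ gives the expected dimensions $\dim I(\tilde{D}_6)_2 = 2$ and $\dim I(\tilde{D}_6)_3 = 19$, provided the restriction maps $H^0(\mathbb{P}^5,\mathcal{O}(d)) \to H^0(\tilde{D}_6,\mathcal{O}(d))$ are surjective for $d = 2, 3$ (i.e. $h^1(\mathcal{I}_{\tilde{D}_6}(d)) = 0$).

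To produce the generators concretely, I would substitute
\[
[v_0:v_1:v_2:v_3:v_4:v_5] = [x_0x_1^2 : x_0x_2^2 : x_1^2x_2 : x_0^2x_1 : x_2^2x_1 : x_0^2x_2]
\]
into generic quadrics and cubics in $v_0,\dots,v_5$, and read off the kernel of the resulting linear map to $\mathbb{C}[x_0,x_1,x_2]_d$. This yields two quadrics $q_1, q_2$ spanning $I(\tilde{D}_6)_2$ and a $19$-dimensional space of cubics. Among the latter, the products $\ell \cdot q_i$ with $\ell$ linear span a subspace of dimension $2 \cdot 6 = 12$, provided $q_1$ and $q_2$ admit no linear syzygy $\ell_1 q_1 + \ell_2 q_2 = 0$; this is equivalent to $q_1$ and $q_2$ sharing no common linear factor, which is immediate from their explicit form. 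The $7$ minimal cubic generators claimed in the statement are then any complement of this $12$-dimensional subspace inside $I(\tilde{D}_6)_3$.

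The remaining delicate points are to justify $h^1(\mathcal{I}_{\tilde{D}_6}(d)) = 0$ for $d = 2, 3$ and to rule out the need for minimal generators in degree $\geq 4$. The cohomological vanishing follows from the projective normality of $D_6 \subset \mathbb{P}^6$ combined with the fact that the center of projection lies off the secant variety, which preserves $2$- and $3$-normality. The absence of higher-degree generators, as well as the previous vanishing, can be confirmed by asking \verb|MAGMA| for a minimal free resolution of the ideal of $\tilde{D}_6$, using the scripts referenced in the introduction. The principal conceptual obstacle is therefore the intrinsic verification that $h^1(\mathcal{I}_{\tilde{D}_6}(d))$ vanishes for small $d$; the rest is a straightforward generator-counting argument.
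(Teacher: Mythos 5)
Your dimension counts are correct and consistent with the paper's explicit answer: the Hilbert polynomial of the anticanonically embedded sextic del Pezzo is $3t^2+3t+1$, so (granting surjectivity of restriction in degrees $2$ and $3$) one gets $\dim I_2=21-19=2$, $\dim I_3=56-37=19$, and $19-2\cdot 6=7$ minimal cubic generators, the two quadrics being irreducible and hence without a linear syzygy. Note, though, that the paper offers no conceptual argument at all: its proof is a single \texttt{MAGMA} computation exhibiting the nine generators. The genuine gap in your proposal is the assertion that projection from a point off the secant variety ``preserves $2$- and $3$-normality.'' This is not an off-the-shelf fact one may invoke: $1$-normality visibly fails after projection (since $h^0(\mathcal{O}_{\tilde{D}_6}(1))=7>6$), so the cohomology of the ideal sheaf does change, and any claim that the failure is confined to degree $1$ needs an actual proof; the known results of this flavour (on projections of varieties satisfying property $N_p$) require hypotheses beyond projective normality of $D_6\subset\mathbb{P}^6$. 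Since you then propose to certify both this vanishing and the absence of generators in degree $\geq 4$ by a \texttt{MAGMA} minimal free resolution, your proof, in the form in which it is rigorous, collapses to the same computation as the paper's, with the Hilbert-function count serving only as a (correct and useful) consistency check.

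The gap can in fact be closed by hand, which would make your route a genuine improvement on the paper's. The six cubics parametrizing $\tilde{D}_6$ are \emph{monomials} whose exponent vectors are the six permutations of $(2,1,0)$; hence $\tilde{D}_6$ is toric and $\dim I_d$ is computed by counting coincidences among products of these monomials. In degree $2$ the only coincidence is $v_0v_1=v_2v_5=v_3v_4=(x_0x_1x_2)^2$, which gives exactly two quadrics (precisely the binomials $Q_1,Q_2$ of the paper), and the $19$ distinct monomial images are linearly independent elements of the $19$-dimensional space $H^0(S,-2K_S)$, proving surjectivity in degree $2$, i.e.\ $h^1(\mathcal{I}_{\tilde{D}_6}(2))=0$. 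Once this is known, the ideal-sheaf sequence together with the easy vanishings $h^1(S,-K_S)=h^2(S,\mathcal{O}_S)=0$ shows that $\mathcal{I}_{\tilde{D}_6}$ is $3$-regular in the sense of Castelnuovo--Mumford, and $3$-regularity gives simultaneously the $3$-normality you wanted and generation of the saturated ideal in degrees $\leq 3$, so no separate check in higher degrees is needed. With these two additions your counting argument becomes a complete, computer-free proof of the proposition.
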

\begin{proof}

Using \verb|MAGMA| we can explicitly give the generators of the ideal of $\tilde{D}_6 \subset  \PP^5$. They are
\[    (Q_1, Q_2, F_1, F_2, F_3, F_4, F_5, F_6, F_7),
\]
where
$$
\begin{array}{l}
Q_1:= v_3v_4 - v_2v_5, \\
Q_2:= v_0v_1 - v_2v_5, \\
F_1:= v_2v_3^2 - v_0^2v_5,\\
F_2:= v_1v_3^2 - v_0v_5^2, \\
F_3:= v_2^2v_3 - v_0^2v_4, \\
F_4:=v_1v_2v_3 - v_0v_4v_5,\\
F_5:=v_1^2v_3 - v_4v_5^2, \\
F_6:=v_1v_2^2 - v_0v_4^2,\\
F_7:= v_1^2v_2 - v_4^2v_5.
\end{array}
$$

\end{proof}

Hence the ideal of $\tilde{D}_6$ is generated by two quadrics and seven cubics; so we obtain a result which is similar to that proved in \cite[Theorem 2.1]{K09}, where the author proves that the generic complete intersection $Y \subset \PP^5$ of two cubic fourfolds containing $\tilde{D}_6$ is a Calabi-Yau threefold with $36$ ordinary double points.

Furthermore, we have again an analogous result.

\begin{lem}\label{lem_36nodes}For a generic choice of the complex coefficients $a_1, \ldots , a_7$ and $b_1, \ldots ,b_7$, the  complete intersection of the two cubic fourfolds

\begin{equation*}
\begin{split}
A_1:= & a_1(v_2v_3^2 - v_0^2v_5)+a_2(v_1v_3^2 - v_0v_5^2)+a_3(v_2^2v_3 - v_0^2v_4)+
\\
+ & a_4(v_1v_2v_3 - v_0v_4v_5)+
a_5(v_1^2v_3 - v_4v_5^2)+a_6(v_1v_2^2 - v_0v_4^2)+a_7(v_1^2v_2 - v_4^2v_5),
\\
A_2:= & b_1(v_2v_3^2 - v_0^2v_5)+b_2(v_1v_3^2 - v_0v_5^2)+b_3(v_2^2v_3 - v_0^2v_4)+
\\
+ & b_4(v_1v_2v_3 - v_0v_4v_5)+b_5(v_1^2v_3 - v_4v_5^2)+b_6(v_1v_2^2 - v_0v_4^2)+ b_7(v_1^2v_2 - v_4^2v_5),
\end{split}
\end{equation*}
 in $\PP^5$ contains $\tilde{D_6}$ and has $36$ ordinary double points.
\end{lem}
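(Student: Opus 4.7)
The plan is to adapt the argument of Kapustka \cite[Theorem 2.1]{K09} to the present setting. Containment $\tilde{D}_6 \subset Y' := V(A_1, A_2)$ is immediate from Proposition \ref{prop_7cubics}: both $A_1$ and $A_2$ are linear combinations of the cubics $F_1, \ldots, F_7$ belonging to $\ii_{\tilde{D}_6}$, so they vanish on $\tilde{D}_6$. The substance of the lemma lies in locating and counting the $36$ nodes.

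First I would check that for generic $(a_i), (b_j)$ the singular locus of $Y'$ is supported in $\tilde{D}_6$. The scheme-theoretic base locus of the $7$-dimensional linear subsystem $\Lambda := \langle F_1, \ldots, F_7 \rangle \subset |\oo_{\PP^5}(3)|$ coincides with $\tilde{D}_6$, a fact one verifies in \verb|MAGMA| by comparing the saturated ideal generated by $F_1, \ldots, F_7$ with $\ii_{\tilde{D}_6}$. A Bertini argument applied to generic two-dimensional sub-pencils of $\Lambda$ then gives smoothness of $V(A_1, A_2)$ outside $\tilde{D}_6$.

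Next I would identify $\mathrm{Sing}(Y') \cap \tilde{D}_6$ as a Thom--Porteous degeneracy locus. Since $A_i|_{\tilde{D}_6} \equiv 0$, the differential $dA_i(p)$ kills $T_p \tilde{D}_6$ for every $p \in \tilde{D}_6$, so it descends to a section $s_i$ of the rank-three twisted conormal bundle
\[
\mathcal{N} \;:=\; N^*_{\tilde{D}_6/\PP^5} \otimes \oo_{\tilde{D}_6}(3).
\]
A point $p$ is singular on $Y'$ precisely when $s_1(p)$ and $s_2(p)$ are linearly dependent, i.e., when the induced morphism $\varphi : \oo_{\tilde{D}_6}^{\oplus 2} \to \mathcal{N}$ has rank $\leq 1$. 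This second degeneracy locus has expected codimension $(3-1)(2-1) = 2$; for generic $(a_i), (b_j)$ it is therefore zero-dimensional of length $\deg c_2(\mathcal{N})$. Using the conormal sequence
\[
0 \to N^*_{\tilde{D}_6/\PP^5} \to \Omega^1_{\PP^5}|_{\tilde{D}_6} \to \Omega^1_{\tilde{D}_6} \to 0,
\]
together with $c(\Omega^1_{\PP^5}) = (1-H)^6$, the anticanonical equality $-K_{\tilde{D}_6} = H|_{\tilde{D}_6}$, the degree $H^2 \cdot \tilde{D}_6 = 6$, and the Euler number $c_2(\tilde{D}_6) = e(D_6) = 6$, one computes $c_1(N^*) = -5H$ and $\deg c_2(N^*) = 54$. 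The rank-three tensor formula then gives $c_2(\mathcal{N}) = c_2(N^*) + 6H\cdot c_1(N^*) + 27 H^2 = c_2(N^*) - 3 H^2$, so $\deg c_2(\mathcal{N}) = 54 - 18 = 36$, confirming the count.

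The main remaining obstacle is showing that each of the $36$ singularities is an \emph{ordinary} double point rather than a worse one. This I would handle by an explicit \verb|MAGMA| sample: on one generic choice of coefficients one directly verifies that the $36$ points are distinct and that the quadratic part of the local equation at each of them has maximal rank. Since the condition ``the singular locus consists of exactly $36$ points, each an $A_1$-node'' is Zariski-open in the $14$-parameter space, the existence of a single such sample propagates to a dense open set containing the generic member, completing the proof.
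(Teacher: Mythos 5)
Your overall strategy is essentially the one the paper itself follows: containment is immediate from Proposition \ref{prop_7cubics}, the $36$ singular points on $\tilde{D}_6$ are counted by a Chern class computation on the twisted conormal bundle (the paper quotes Kapustka for the statement that the singular points are supported on the Poincar\'e dual of $c_2(\mathcal{N}_{\tilde{D}_6/\PP^5}(3))$), and the ODP condition is certified by a computer check. Your Thom--Porteous version of the count, with $c_1(N^*)=-5H$, $\deg c_2(N^*)=54$ and $\deg c_2(N^*\otimes\oo_{\tilde{D}_6}(3))=54-18=36$, is correct and is in fact a cleaner rendering of that step.

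There is, however, a genuine gap in your first step: the claim that the base locus of $\Lambda=\langle F_1,\dots,F_7\rangle$ coincides with $\tilde{D}_6$ is false, and the MAGMA verification you propose would show precisely that. All seven cubics vanish identically on the line $L_1=\{v_0=v_1=v_3=v_4=0\}$ (every monomial occurring in the $F_i$ contains one of $v_0,v_1,v_3,v_4$), while $Q_1=v_3v_4-v_2v_5=-v_2v_5$ does not vanish on $L_1$, so $L_1\subset\mathrm{Bs}(\Lambda)$ but $L_1\not\subset\tilde{D}_6$; the symmetry of the configuration produces two further such lines, $\{v_0=v_1=v_2=v_5=0\}$ and $\{v_2=v_3=v_4=v_5=0\}$. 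The source of the discrepancy is that your $A_1,A_2$ are combinations of the $F_i$ alone, whereas Kapustka's theorem concerns generic cubics in the full system $|\mathcal{I}_{\tilde{D}_6}(3)|$, which also contains the multiples of $Q_1,Q_2$; the lemma's $14$-parameter family is a proper subfamily, so ``generic'' here is not covered by ``generic'' there. Consequently your Bertini argument only confines the singular locus of $V(A_1,A_2)$ to $\tilde{D}_6\cup L_1\cup L_2\cup L_3$, and extra singularities on those lines are a real possibility: the special member $Y^{\prime}$ of this very family studied in Lemma \ref{lem_72nodes} has $36$ additional nodes lying off the del Pezzo surface. The gap then infects your final step, because your sample check only inspects the $36$ predicted degeneracy points on $\tilde{D}_6$ and therefore cannot certify the openness hypothesis ``the singular locus consists of exactly these $36$ points.'' The repair is not difficult but must be made: either verify directly that for generic coefficients the Jacobian of $(A_1,A_2)$ has rank $2$ along the three lines (for instance, at $(0:0:1:0:0:1)$ one computes $dA_1=-a_2\,dv_0+a_6\,dv_1+a_3\,dv_3-a_5\,dv_4$ and similarly for $dA_2$, which are independent for generic coefficients), or let the computational check determine the entire singular scheme of one sample member from its Jacobian ideal, confirm it is $36$ reduced nodes all lying on $\tilde{D}_6$, and only then invoke openness in the parameter space.
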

\begin{proof} The fact that $\tilde{D}_6$ is contained in the complete intersection follows from the previous lemma. We have to check that the complete intersection is singular and determine the singularities. Assume, by contradiction, that our threefold is smooth in the whole projective space. By iteration of the Lefschetz hyperplane theorem, the Picard group of such a threefold is isomorphic to ${\mathbb Z}$. Therefore, the divisor associated with $\tilde{D}_6$ is a mupltiple of the hyperplane divisor restricted to the threefold, i.e., $\tilde{D}_6=aH$ where $a$ is an integer. If we mupltiply both sides by $H^2$, we get $6=9a$, which is clearly impossible for an integer $a$. Therefore, our threefold must be singular, and $\tilde{D}_6$ is a ${\mathbb Q}$-Cartier divisor in it. As shown in \cite{K}, the singular points are supported on the Poincar\'e dual of the Chern class $c_2({\mathcal N}_{D/{\mathbb P}^5}(3))$. This equals
\[
c_2({\mathcal N}_{D/{\mathbb P}^5}) -3c_1({\mathcal N}_{D/{\mathbb P}^5}) +9H_{|D},
\]
where $H$ is the hyperplane class on the ambient projective space. By applying the exact sequence that defines the normal bundle, it is possible to show that $c_2({\mathcal N}_{D/{\mathbb P}^5}(3))$ is a multiple of $\Gamma^2$, where $\Gamma = H_{|D}$. This can be computed as the intersection of two curves: both can be viewed as degree $18$ curves obtained from the intersection of one cubic fourfold, two hyperplanes and the surface $D$. To check that these $36$ points are ODP's, we use a \verb|MAGMA| script in order to avoid tedious - but straightforward - calculations.
\end{proof}

In \cite{K09}, the author discusses various resolutions of the singularities of $Y$.


\section{A Calabi-Yau threefold with $72$ nodes and \\ with a  $(\ZZ/6\ZZ)$-symmetry}


Within the family of threefolds constructed in the previous section, we would like to have one with a non-trivial automorphism group. For this purpose, we replace the considered linear system of cubics by a general one-dimensional system of cubics which is invariant under the  $(\ZZ/6\ZZ)$-action \eqref{eq_Z6Action} .
\begin{lem}\label{lem_72nodes} The complete intersection $Y^{\prime}$ of the two cubics
\begin{equation*}
\begin{split}
A^{\prime}_1 & := F_1+F_2-F_6-F_7,
\\
A^{\prime}_2 & :=F_2-F_3+F_5-F_6,
\end{split}
\end{equation*}
 in $\PP^5$ contains $\tilde{D_6}$ and has $72$ ordinary double points. Through each node $p$ there passes a smooth surface $S$ in $Y^{\prime}$, which is smooth at $p$. The automorphism group of $Y'$ contains a cyclic subgroup of order $6$.
\end{lem}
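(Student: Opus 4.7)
The plan is to establish the four claims in turn, using Lemma \ref{lem_36nodes} as a template and exploiting the built-in symmetry wherever possible. That $\tilde{D}_6 \subset Y'$ is immediate: both $A'_1$ and $A'_2$ are integer linear combinations of the cubics $F_1,\ldots,F_7$ listed in Proposition \ref{prop_7cubics}, and these cubics all vanish on $\tilde{D}_6$. For the singular locus, I would apply the Jacobian criterion: form the $2 \times 6$ Jacobian matrix of $(A'_1, A'_2)$, take the ideal generated by its $2\times 2$ minors together with $A'_1, A'_2$, and verify with a \verb|MAGMA| script that this ideal cuts out a reduced zero-dimensional scheme of length $72$ in $\PP^5$. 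By analogy with Lemma \ref{lem_36nodes}, a generic pencil member already carries $36$ nodes; the special coefficient choice is designed to be stable under the action \eqref{eq_Z6Action}, and this extra symmetry forces additional orbits, doubling the count to $72$.

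To confirm that each singular point $p$ is an ordinary double point, I would compute the Hessian of a local equation of $Y'$ at $p$ and check that it has rank $4$ on the tangent space $T_p\PP^5$. Rather than performing this calculation $72$ times, I would use the $\ZZ/6\ZZ$-symmetry, together with any extra finite symmetries of the pencil, to partition the nodes into orbits and treat one representative from each. The existence of a smooth surface $S \subset Y'$ through each node $p$ and smooth at $p$ is the step I expect to be the main obstacle: the natural candidates are $\tilde{D}_6$ itself and its images under the subgroup of $\Aut(D_6)$ from Section \ref{sec_delPezzo} that extends to a linear action on $\PP^5$, possibly supplemented by other Weil divisors (scrolls, planes) contained in $Y'$. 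Since each such candidate is smooth in $\PP^5$ by construction, smoothness at $p$ would be automatic; what remains is a \verb|MAGMA| verification that every one of the $72$ nodes lies on at least one such surface.

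Finally, for the cyclic subgroup of order $6$ in $\Aut(Y')$: the coefficient vectors $(1,1,0,0,0,-1,-1)$ and $(0,1,-1,0,1,-1,0)$ defining $A'_1, A'_2$ in terms of $F_1,\ldots,F_7$ were chosen so that the $\ZZ/6\ZZ$-action \eqref{eq_Z6Action} on $\PP^5$ sends the pencil $\langle A'_1, A'_2 \rangle$ to itself. It thus suffices to compute the induced linear action on the basis $\{F_1,\ldots,F_7\}$ and verify that the image of each coefficient vector lies in the span of the two, a routine linear algebra check that then yields the desired order $6$ automorphism of $Y'$.
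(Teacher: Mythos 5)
Three of your four claims track the paper's own proof closely: the containment $\tilde{D}_6 \subset Y'$ is the same observation; the count of $72$ nodes and the ODP verification are delegated to \texttt{MAGMA} in the paper as well; and your linear-algebra check for the order-$6$ symmetry is exactly what the paper does, namely the action \eqref{eq_Z6Action} permutes the cubics as $F_1 \to F_3 \to F_6 \to F_7 \to F_5 \to F_2 \to F_1$ (fixing $F_4$), whence $\sigma(A'_2)=A'_1$ and $\sigma(A'_1)=A'_1-A'_2$, so the pencil and hence its base locus $Y'$ are preserved. (The paper additionally explains where $\sigma$ comes from: the $3$-cycle $(1,2,3)\in\mathfrak{S}_3$ composed with the Cremona involution inside ${\rm Aut}(D_6)$.)

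The genuine gap is in the step you yourself flagged as the main obstacle: producing a smooth surface through each node. Your only concrete candidates are $\tilde{D}_6$ and its images under the subgroup of ${\rm Aut}(D_6)$ that acts linearly on $\PP^5$; but any such automorphism maps $D_6$ to itself, hence maps $\tilde{D}_6$ to itself, so the orbit of $\tilde{D}_6$ is just $\{\tilde{D}_6\}$. Since exactly $36$ of the $72$ nodes lie off $\tilde{D}_6$ (the paper's sets $P_1,\dots,P_6$), your candidate list provably misses half the nodes, and ``other Weil divisors (scrolls, planes)'' is a placeholder, not a construction, so there is nothing left to feed into the proposed \texttt{MAGMA} check. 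The missing idea is liaison: by Peskine--Szpiro \cite[Proposition 4.1]{PS74}, suitable polynomials in the ideal of $\tilde{D}_6$ cut out, inside $Y'$, the union of $\tilde{D}_6$ and a residual surface that is smooth at the nodes. Concretely, the paper intersects $Y'$ with the quadrics $Q_1$, $Q_2$, $Q_1+Q_2$ of Proposition \ref{prop_7cubics}: each such section is reducible with four smooth components --- $\tilde{D}_6$, a second degree-$6$ del Pezzo $\Delta_i$, a cubic surface $J_i$, and a surface $K_i$. Each $\Delta_i$ contains $24$ of the nodes ($12$ lying on $\tilde{D}_6$ and two of the sets $P_j$), and the three $\Delta_i$ form a single $\sigma$-orbit (because $\sigma$ cycles $Q_1 \to Q_1+Q_2 \to Q_2 \to Q_1$), so together they pass through all $72$ nodes; smoothness at each node is then automatic because these components are smooth surfaces. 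Without this supply of residual surfaces, or an equivalent one, the third assertion of the lemma remains unproved in your proposal.
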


\begin{rem}\label{rem_surfaces} Now, let us describe in detail the surfaces in $Y'$ with equations $Q_1=0$, $Q_2=0$ and $Q_3:=Q_1+Q_2=0$. Each of them is reducible and, in particular, there are four smooth primary components. One of them is the del Pezzo $\tilde{D}_6$ and the remaining three will be denoted by $\Delta_i$, $J_i$ and $K_i$ for $i=1,2,3$. Moreover, each $\Delta_i$ is a smooth del Pezzo surface of degree $6$ and each of the $J_i$'s is a smooth cubic surface.

\begin{proof} (Lemma 4.1)

First, one can check that $Y^{\prime}$ has exactly $72$ ordinary double points: this is done via \verb|MAGMA| calculation. Second, we have to study in details the configuration of these nodes, which is rather symmetric. We verify that $36$ of them are contained in the del Pezzo surface $\tilde{D}_6$. Before taking into account the other $36$ singular points, we describe an automorphism of $Y^{\prime}$ of order $6$.

Let us consider the element of order $3$ given by $(1,2,3) \in \mathfrak{S}_3$, and acting on $D_6$ (as described in Section \ref{sec_delPezzo}), and the Cremona involution. The group generated by these two automorphisms is a subgroup of ${\rm Aut}(D_6)$ isomorphic to $\ZZ/6\ZZ$. The action induced on $\PP^6$ is given by $\langle (4,6,2,5,3,1) \rangle$, where the last coordinate is fixed. Therefore, we get an action $\sigma$ on $\PP^5$ given by the
\begin{equation}\label{eq_Z6Action} v_0 \mapsto v_3, v_1 \mapsto v_4, v_2 \mapsto v_0, v_3 \mapsto v_5, v_4 \mapsto v_2, v_5 \mapsto v_1,
\end{equation}
which leaves $\tilde{D}_6$ invariant.

The morphism $\sigma$ acts on the $F_i$'s as follows:
$$
F_1 \rightarrow F_3 \rightarrow F_6 \rightarrow F_7 \rightarrow F_5 \rightarrow F_2 \rightarrow F_1,
$$
whereas $F_4$ is fixed. Moreover,  $\sigma$ acts on the $Q_i$'s as follows:
$$
Q_1 \rightarrow Q_1+Q_2 \rightarrow Q_2 \rightarrow Q_1.
$$

The $36$ singular nodes, which are not contained in $\tilde{D_6}$, are partitioned into six subsets, each of them consisting of 6 points.
Let us denote them by $P_1, \ldots ,P_6$. One verifies that
\begin{equation*}
P_1 \subset V(Q_1), \,  P_2 \subset V(Q_1) \cap V(F_1) \cap V(F_3) \cap V(F_5) \cap V(F_7),
\end{equation*}
\begin{equation*}
P_4 \subset V(Q_2), \,  P_3 \subset V(Q_2) \cap V(F_1) \cap V(F_2) \cap V(F_6) \cap V(F_7),
\end{equation*}
\begin{equation*}
P_6 \subset V(Q_1+Q_2), \,
P_5 \subset V(Q_1+Q_2) \cap V(F_2) \cap V(F_3) \cap V(F_5) \cap V(F_6).
\end{equation*}

\medskip

The action of $\sigma$ on these $36$ nodes transforms the subsets $P_i$ in the following way (see Figure \ref{fig:M1} for a schematic representation of the situation):
$$
P_1 \rightarrow P_6 \rightarrow P_4 \rightarrow P_1,
$$
$$
P_5 \rightarrow P_3 \rightarrow P_2 \rightarrow P_5.
$$

\definecolor{ffwwqq}{rgb}{1,0.8,0}
\definecolor{ffffqq}{rgb}{1,1,0}
\definecolor{qqzzqq}{rgb}{0,0.8,0}
\definecolor{ffqqtt}{rgb}{1,0,0.2}
\definecolor{xdxdff}{rgb}{0.49,0.49,1}
\definecolor{uququq}{rgb}{0.25,0.25,0.25}
\definecolor{zzttqq}{rgb}{0.6,0.2,0}
\definecolor{qqqqff}{rgb}{0,0,1}
\definecolor{fffftt}{rgb}{1,1,0.2}
\definecolor{qqqqff}{rgb}{0,0,1}
\definecolor{ffttqq}{rgb}{1,0.2,0}
\begin{center}
\begin{figure}
\centering

\begin{tikzpicture}[line cap=round,line join=round,>=triangle 45,x=1.0cm,y=1.0cm]
\clip(-4.67,-5.5) rectangle (11.92,3.98);
\draw (-1.06,2.61)-- (1.22,2.61);
\draw (1.75,2.24)-- (2.82,0.38);
\draw (2.83,-0.42)-- (1.81,-2.19);
\draw (1.1,-2.66)-- (-1,-2.66);
\draw (-1.73,-2.26)-- (-2.75,-0.5);
\draw (-2.84,0.31)-- (-1.66,2.33);
\draw [color=ffqqtt] (0,0) circle (1cm);
\draw [color=qqzzqq] (-0.49,-0.39)-- (-3.03,1.19);
\draw [color=qqzzqq] (-0.49,-0.39)-- (-0.61,-3.92);
\draw [color=qqzzqq] (-0.32,-3.93)-- (-0.23,-0.2);
\draw [color=qqzzqq] (-0.23,-0.2)-- (-2.78,1.51);
\draw [color=qqzzqq] (1,-4)-- (0.64,-0.36);
\draw [color=qqzzqq] (0.64,-0.36)-- (3.4,1.25);
\draw [color=qqzzqq] (0.67,-4.03)-- (0.33,-0.19);
\draw [color=qqzzqq] (0.33,-0.19)-- (3.29,1.5);
\draw [color=qqzzqq] (2.56,2.28)-- (0.01,0.61);
\draw [color=qqzzqq] (0.01,0.61)-- (-2.34,2.53);
\draw [color=qqzzqq] (2.76,2.11)-- (-0.02,0.26);
\draw [color=qqzzqq] (-0.02,0.26)-- (-2.51,2.33);
\draw [color=qqzzqq] (-1.11,0.78)-- (0,0);
\draw [color=qqzzqq] (0,0)-- (1.26,0.85);
\draw [color=ffffqq] (-2.41,-2.11)-- (-0.73,-0.61);
\draw [color=ffffqq] (-0.73,-0.61)-- (-0.92,-3.14);
\draw [color=ffffqq] (0.78,-0.43)-- (2.43,-2.01);
\draw [color=ffffqq] (0.78,-0.43)-- (3.33,0.89);
\draw [color=fffftt] (-1.98,2.46)-- (-0.02,0.87);
\draw [color=ffffqq] (-0.02,0.87)-- (0.76,2.77);
\draw (-1.06,2.61)-- (1.22,2.61);
\draw [color=ffttqq] (0.71,-0.56)-- (2.22,-2.15);
\draw [color=ffttqq] (0.2,0.86)-- (1.03,2.76);
\draw [color=ffttqq] (-0.79,-0.43)-- (-2.46,-1.78);
\draw [color=qqqqff] (0.83,0.29)-- (2.68,1.36);
\draw [color=qqqqff] (-0.75,0.34)-- (-2.35,1.44);
\draw [color=qqqqff] (0.05,-0.35)-- (0.05,-3.38);
\draw (0.41,3.12) node[anchor=north west] {$\Delta_3$};
\draw (2.63,-1.81) node[anchor=north west] {$\Delta_2$};
\draw (-2.35,-2.02) node[anchor=north west] {$\Delta_1$};
\draw (1.15,3.07) node[anchor=north west] {$J_1$};
\draw (-2.78,1.9) node[anchor=north west] {$K_3$};
\draw (2.75,1.9) node[anchor=north west] {$K_2$};
\draw (2.16,-2.14) node[anchor=north west] {$J_2$};
\draw (-2.9,-1.75) node[anchor=north west] {$J_1$};
\draw (0,-3.2) node[anchor=north west] {$K_1$};
\draw[color=qqqqff] (-0.86,-3.79) node {$F_3$};

\draw[color=qqqqff] (-3,1.55) node {$F_5$};

\draw[color=qqqqff] (3.65,1.28) node {$F_7$};

\draw[color=qqqqff] (-0.1,-3.95) node {$F_1$};

\draw[color=qqqqff] (2.72,2.41) node {$F_2$};

\draw[color=qqqqff] (-3,2.2) node {$F_6$};

\draw[color=qqqqff] (-1.45,1.1) node {$F_4$};




\draw (- 2.88,-1.00) node {$P_1$};

\draw (1.3,-2.8) node {$P_2$};

\draw(2.88,-1.00) node {$P_4$};

\draw (-2.5,0.40) node {$P_5$};

\draw (2.48,1.5) node {$P_3$};

\draw (0,2.88) node {$P_6$};

\end{tikzpicture}
\caption{Configuration of Quadrics, Cubics and Singular Points}
\label{fig:M1}
\end{figure}
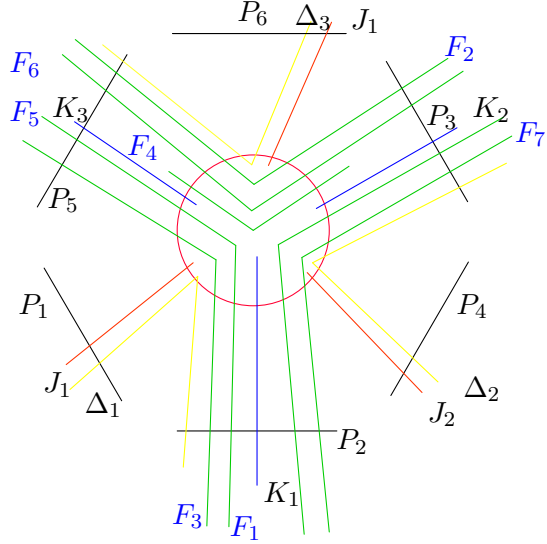
\end{center}
Each del Pezzo surface $\Delta_i$  contains two of the subsets $P_i$'s - as described in Figure 1 - and $12$ of the nodes in $\tilde{D}_6$. Each cubic surface $J_i$  contains only one of the subsets $P_i$'s - as described in  Figure 1 - and $6$ of the nodes $\tilde{D}_6$.  Finally, each $K_j$'s contains one of the subsets $P_j$'s - as described in Figure 1  - and $6$ of the nodes $\tilde{D}_6$.

It remains to prove the existence of a surface $S$ passing through a node and smooth at it. Notice that by \cite[Proposition 4.1]{PS74} there exist three polynomials in the ideal defining $\tilde{D_6}$ such that the intersection of the corresponding subschemes is a union of $\tilde{D_6}$ and a surface in projective space (see also \cite[p. 286]{PS74}), which is smooth at a node of the threefold. For all the nodes such polynomials can chosen as the intersection of $Y'$ with the hypersurface $Q_1$. Since $Q_1$ is reducible and we are looking for a smooth surface it is enough to consider the intersection $T$ of $Y'$ with the hypersurface $\Delta_1$  as well as $\sigma(T)$ and $\sigma^2(T)$.
\end{proof}

The intersection of $Y'$ with the cubic $F_t$'s are reducible as well. One component is the del Pezzo $\tilde{D}_6$ and the remaining component is a degree $21$ surface that satisfies the properties of Lemma \ref{lem_72nodes}.

Finally, there exist two smooth surfaces $M_1$ and $N$, which are given as follows. The former is one of the two smooth components of a surface $M=M_1+M_2$ that is invariant with respect to the $\ZZ/6\ZZ$-symmetry and passes through four points of each set $P_i$ and does not contain any further singularities of the singular threefold.The latter is projective plane that is invariant with respect to the $\ZZ/6\ZZ$-symmetry and passes through two points (complementary to the four contained in $M$) of each set $P_i$ and does not contain any further singularities of $Y'$.

These two surfaces are obtained by looking at the coordinates of the $36$ singular points not on $\tilde{D}_6$, which can be explicitly computed via MAGMA, and finding out the surfaces whose ideal is generated by quadrics passing through such points.
\label{rem surfaces}
\end{rem}

\begin{rem} Notice that we do not know the full automorphism group of $Y'$.
\end{rem}

\begin{prop}\label{cor_resCYproj} There exists a projective crepant resolution $Y_6 \rightarrow Y^{\prime}$ of $Y^{\prime}$ such that $Y_6$ is a Calabi-Yau threefold with the following Hodge numbers:
\[ h^{1,1}(Y_6)=10, \, h^{1,2}(Y_6)=10.
\]
We can choose the resolution in such a way that it contains a del Pezzo surface of degree six isomorphic to $\tilde{D}_6$. Moreover, $Y_6$ has an induced $\ZZ/6\ZZ$ action, which leaves the del Pezzo surface invariant.
\end{prop}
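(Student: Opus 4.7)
The plan is to construct $Y_6$ as a projective small resolution of all $72$ nodes of $Y'$, and then to compute its Hodge numbers using a conifold transition argument.

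\textbf{Small resolution.} An ordinary double point on a threefold admits a small resolution by blowing up a Weil divisor through the node that is smooth (as a surface) at the node but not Cartier there; such a resolution replaces the node by a $\mathbb{P}^1$ and is automatically crepant. By Lemma \ref{lem_72nodes} together with the detailed description in Remark \ref{rem_surfaces}, through each of the $72$ nodes of $Y'$ one can exhibit a smooth surface drawn from the $\mathbb{Z}/6\mathbb{Z}$-invariant families $\{\tilde{D}_6, \Delta_i, J_i, K_i, M_1, N\}$. I would define $f \colon Y_6 \to Y'$ as the blow-up of $Y'$ along a $\mathbb{Z}/6\mathbb{Z}$-invariant union of such surfaces covering all nodes, choosing the representatives so that the proper transform of $\tilde{D}_6$ is still isomorphic to $\tilde{D}_6$ (at each of the $36$ nodes lying on $\tilde{D}_6$, blow up one of the other invariant surfaces through that node).

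\textbf{Projectivity and Calabi--Yau property.} Projectivity is automatic since $Y_6$ is the blow-up of a projective variety along a projective subscheme. Adjunction for a complete intersection of bidegree $(3,3)$ in $\mathbb{P}^5$ gives $K_{Y'} = (3+3-6)H|_{Y'} = 0$; since the resolution is small and hence crepant, $K_{Y_6} \cong \mathcal{O}_{Y_6}$. Ordinary double points are rational singularities, so $R^i f_* \mathcal{O}_{Y_6} = 0$ for $i > 0$, and the vanishing of $h^{1,0}$ and $h^{2,0}$ propagates from a smoothing $Y_s$ via standard specialization.

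\textbf{Hodge numbers.} For a nodal Calabi--Yau threefold $Y'$ with $n$ nodes, small resolution $Y_6$ and smoothing $Y_s$, the conifold transition relations read
\[
h^{1,1}(Y_6) = h^{1,1}(Y_s) + r, \qquad h^{2,1}(Y_6) = h^{2,1}(Y_s) - (n - r),
\]
where $r$ is the defect (the number of independent linear relations among the exceptional $\mathbb{P}^1$'s in $H^2(Y_6,\mathbb{Q})$, equivalently among the vanishing cycles at the $72$ nodes in $H_3(Y_s,\mathbb{Q})$). The smooth $(3,3)$ complete intersection in $\mathbb{P}^5$ has $(h^{1,1}, h^{2,1}) = (1, 73)$; with $n = 72$, the target values $(10, 10)$ force $r = 9$. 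I would verify $r = 9$ via a MAGMA computation analogous to those used in Lemmas \ref{lem_36nodes} and \ref{lem_72nodes}, by determining the rank of the evaluation map from the space of cubics vanishing on $\tilde{D}_6$ to the local data at the $72$ nodes.

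\textbf{Equivariance.} Since the collection of blow-up centers is globally $\mathbb{Z}/6\mathbb{Z}$-invariant, the action on $Y'$ lifts to $Y_6$; the proper transform of $\tilde{D}_6$, which is isomorphic to $\tilde{D}_6$ by the choice in Step 1, is preserved setwise by the lifted action. The main obstacle is the explicit determination of the defect $r=9$, which is handled computationally rather than by a closed-form geometric argument.
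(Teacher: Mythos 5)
Your overall architecture matches the paper's quite closely: the resolution is obtained by blowing up a $\ZZ/6\ZZ$-invariant collection of Weil divisors through the nodes (the paper blows up exactly the three del Pezzo surfaces $\Delta_1,\Delta_2,\Delta_3$, which form a single orbit, contain all $72$ nodes, and leave the strict transform of $\tilde{D}_6$ a degree-six del Pezzo), smallness gives crepancy and the Calabi--Yau property, and invariance of the center gives the lifted action. Your conifold-transition route to the Hodge numbers is also a legitimate repackaging of what the paper does: the paper invokes Dimca's result to get $h^{1,1}(Y_6)=h^{2,1}(Y_6)$ (in transition language this is just $h^{2,1}(Y_s)-n=73-72=1=h^{1,1}(Y_s)$), and then computes $h^{2,1}(Y_6)$ as the dimension of the space of equisingular first-order deformations, namely the kernel of $\psi\colon {\rm Ext}^1(\Omega^1_{Y'},\mathcal{O}_{Y'}) \to H^0(Y',\mathcal{E}xt^1(\Omega^1_{Y'},\mathcal{O}_{Y'}))$, following \cite{BN13}; this is the same rank computation that determines your defect, via $r=72-\mathrm{rank}$ and $h^{2,1}=73-\mathrm{rank}$.

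The genuine gap is in your proposed computation of $r$, on which everything hinges. The defect is \emph{not} computed by evaluating ``the space of cubics vanishing on $\tilde{D}_6$'' at the $72$ nodes: that space (spanned by $F_1,\dots,F_7$ together with the degree-three multiples of $Q_1,Q_2$) has dimension at most $19$, whereas your own formulas with target $(10,10)$ require the nodes to impose $n-r=63$ independent conditions, which is impossible on a space of dimension $\le 19$. The correct linear system is the full $73$-dimensional space of first-order deformations of $Y'$ as a $(3,3)$ complete intersection, i.e.\ pairs of cubics $(g_1,g_2)$ modulo the image of the Jacobian map $H^0(Y',\mathcal{O}_{Y'}(1))^6 \to H^0(Y',\mathcal{O}_{Y'}(3))^2$; the condition at each node is that the $2\times 2$ minors of the Jacobian of $(A'_1,A'_2)$ bordered by $(g_1,g_2)$ vanish there. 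The rank of these conditions is $63$, giving $r=9$ and $h^{2,1}(Y_6)=10$; this is precisely the paper's MAGMA computation. Two smaller corrections: first, the defect $r$ is the dimension of the \emph{span} of the exceptional $\PP^1$-classes in $H_2(Y_6,\QQ)$ (equivalently, the number of relations among the vanishing cycles in $H_3(Y_s,\QQ)$), not the number of relations among the exceptional curves, which is $63$. Second, blowing up ``one of the other invariant surfaces through a node'' of $\tilde{D}_6$ does not by itself guarantee that the strict transform of $\tilde{D}_6$ is unchanged: a node admits two small resolutions, and which one a given center induces depends on whether that center meets $\tilde{D}_6$ in a curve through the node or only at the node; the paper's specific choice of the $\Delta_i$ is what makes the strict transform of $\tilde{D}_6$ again a del Pezzo surface of degree six.
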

\begin{proof}  The existence of a small projective crepant resolution follows from Lemma \ref{lem_72nodes} and  \cite[Theorem 1.8 ]{M05}. Since the resolution is small, the canonical divisor of $Y_6$ is trivial. By Lefschetz theorem $h^1(Y^{\prime})=0$; hence the Hodge number $h^{1,0}(Y_6)=0$. By Serre duality, $h^{2,0}(Y_6)=0$, too. It remains to calculate $h^{1,1}(Y_6)$ and $h^{1,2}(Y_6)$. Moreover, by \cite[Corollary 2.3]{Dimca} we have
\[ h^{2,1}(Y_6)=h^{1,1}(Y_6).
\]
Therefore, it suffices to calculate $h^{2,1}(Y_6)$.

As proved in Proposition 7.3 in \cite{BN13}, the Hodge number $h^{2,1}(Y_6)$ is the dimension of the kernel of the map $\psi\colon {\rm Ext}^1(\Omega^1_{Y^{\prime}}, \mathcal{O}_{Y^{\prime}}) \rightarrow H^0(Y^{\prime}, \mathcal{E}xt(\Omega^1_{Y^{\prime}}, \mathcal{O}_{Y^{\prime}}))$. We can represent ${\rm Ext}^1(\Omega^1_{Y^{\prime}}, \mathcal{O}_{Y^{\prime}})$ as the cokernel of the map
\[ H^0(Y^{\prime}, \mathcal{O}_{Y^{\prime}}(1))^6 \rightarrow H^0(Y^{\prime}, \mathcal{O}_{Y^{\prime}}(3))^2
\] given by the Jacobian matrix of $(A_1, A_2)$. Thus, elements in the cokernel are pairs $(g_1,g_2)$. This space is $73$ dimensional; a basis of it can be computed via an \emph{ad hoc} \verb|MAGMA| script. As explained in \cite{BN13}, the kernel of $\psi$ is represented by those pairs $(g_1,g_2)$ in the cokernel such that the $2 \times 2$ minors of
\[
M:=
\left(
\begin{array}{cccc}
g_1 & \partial A_1/ \partial v_0 & \ldots   & \partial A_1/ \partial v_5 \\
g_2 & \partial A_2/ \partial v_0 & \ldots   & \partial A_2/ \partial v_5
\end{array}
\right)
\]
are in the homogeneus ideal of the singular locus of $Y'$. Up to an extension of $\mathbb{Q}$, it is possible to find out explicitly all the coordinates of the singular points. This allows us to evaluate $M$ at these points. Then, a \verb|MAGMA| script enables us to determine the dimension of the kernel of $\psi$ and an explicit basis of it. This dimension is $10$, so $h^{2,1}=10$.  All the scripts needed can be found at the web page mentioned in the Introduction.

A way to obtain $Y_6$ is to blow up the surfaces $\Delta_1$, $\Delta_2$ and $\Delta_3$ described in Remark \ref{rem_surfaces}; accordingly, the strict transform of $D_6$ is a del Pezzo surface of degree 6 on $Y_6$. Notice that we are blowing up an invariant locus with respect to the $\ZZ/6\ZZ$ action on $Y^{\prime}$. Indeed, recall that the blown up surfaces form an orbit under the $\ZZ/6\ZZ$ group: see the proof of Lemma \ref{lem_72nodes}. Finally, the singular points are permuted and one can check that none of them is fixed: see Remark \ref{rem:fixlocusz/6} and the \verb|MAGMA| script at the web page indicated in the Introduction.
\end{proof}

\begin{prop}
\label{pic72}
The Picard group of $Y_6$ is generated by the pull-back of the following divisors:
\begin{enumerate}
\item the hyperplane divisor $H$;\\
\item the divisor associated with del Pezzo surface $\widetilde{D}_6$; \\
\item the zero loci $\Delta_1$, $\Delta_2$, $\Delta_3$; \\
\item the zero loci $J_1$, $J_2$, $J_3$; \\
\item the divisor associated with the surface $M_1$; \\
\item the divisor associated with the surface $N$. \\
\end{enumerate}
\end{prop}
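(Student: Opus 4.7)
The strategy rests on matching the ten proposed divisor classes against the Hodge-theoretic count of Proposition~\ref{cor_resCYproj}. Since $Y_6$ is a simply connected Calabi--Yau threefold with $h^{2,0}(Y_6)=0$ and $h^{1,1}(Y_6)=10$, the Lefschetz $(1,1)$-theorem identifies $\mathrm{Pic}(Y_6)\cong\mathrm{NS}(Y_6)$ with a free abelian group of rank exactly $10$; simple connectivity together with the universal coefficient theorem rules out torsion. Hence it suffices to verify that the ten candidate classes are $\mathbb{Q}$-linearly independent, and they will automatically form a rational basis of $\mathrm{Pic}(Y_6)\otimes\mathbb{Q}$.

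To establish independence, I would form the $10\times 10$ triple-intersection matrix $\bigl(D_i\cdot D_j\cdot H\bigr)_{i,j}$, where $D_1,\ldots,D_{10}$ run through the ten classes listed in the statement and $H$ denotes the hyperplane pullback, and verify that its determinant is nonzero. The required intersection numbers split into two kinds: intersections of surfaces whose set-theoretic triple intersection lies away from the singular locus of $Y^{\prime}$, which can be computed directly on $\mathbb{P}^5$ by B\'ezout from the explicit equations of $\tilde{D}_6,\Delta_i,J_i,M_1,N$ written down in Section~\ref{sec_delPezzo} and Remark~\ref{rem_surfaces}; and intersections that involve the exceptional locus, which require a local analysis of how each Weil divisor meets the exceptional $\mathbb{P}^1$'s at the $72$ nodes. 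Both families of computations are finite and amenable to a \verb|MAGMA| script in the spirit of those already used in the paper.

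The main obstacle is the passage from $Y^{\prime}$ to $Y_6$. On $Y^{\prime}$ the Picard group has rank $1$ by Lefschetz applied to the complete intersection $Y^{\prime}\subset\mathbb{P}^5$, so all ten candidate classes are $\mathbb{Q}$-proportional there; the nine additional independent classes on $Y_6$ must therefore arise from the fact that $\tilde{D}_6,\Delta_i,J_i,M_1,N$ are only $\mathbb{Q}$-Cartier on $Y^{\prime}$ but become Cartier after the small resolution. The delicate point is to confirm that the resulting nine exceptional corrections are themselves linearly independent, equivalently that the $72$ exceptional rational curves satisfy precisely the number of relations predicted by $h^{1,1}(Y_6)=10$; this is exactly what the determinant computation detects. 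Finally, to upgrade the rational basis to an integral generating set as asserted in the proposition, one would compute the discriminant of the intersection form on the sublattice spanned by $D_1,\ldots,D_{10}$ and show it coincides with the discriminant of the full lattice $\mathrm{Pic}(Y_6)$, or equivalently exhibit dual curve classes whose intersection pairings with the $D_i$ span $\mathbb{Z}^{10}$.
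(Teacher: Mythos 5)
Your overall frame agrees with the paper's: both start from the fact that $\mathrm{Pic}(Y_6)$ is free of rank $h^{1,1}(Y_6)=10$ (Proposition \ref{cor_resCYproj}) and reduce the proposition to linear independence of the ten listed classes. The gap lies in how you propose to prove independence. The decisive step --- nonvanishing of $\det\bigl(D_i\cdot D_j\cdot H\bigr)$ --- is neither performed nor guaranteed to succeed. Nonvanishing of this Gram determinant does imply independence, but the converse (which is what you invoke when you say the determinant "detects exactly" the independence) rests on the Hodge index theorem, and that requires the reference class to be Kähler. Here $H$ is the pullback of the hyperplane class under the \emph{small} resolution $Y_6\to Y'$: it is nef and big but not ample, since it has degree zero on each of the $72$ exceptional $\mathbb{P}^1$'s. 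So the pairing $(D,D')\mapsto D\cdot D'\cdot H$ may a priori be degenerate even on a basis of $H^{1,1}$; a vanishing determinant would be inconclusive, and you give no computation or argument showing it is nonzero. Moreover, the entries themselves cannot be obtained "directly on $\mathbb{P}^5$ by B\'ezout": the ten surfaces are Weil but not Cartier on $Y'$ and all pass through many of the $72$ nodes, so every triple product on $Y_6$ depends on which small resolution was chosen (the paper's $Y_6$ is obtained by blowing up $\Delta_1,\Delta_2,\Delta_3$); the "local analysis at the nodes" you defer is exactly where the content lies. The paper sidesteps the whole matrix by structural arguments: a rigidity argument separating $H$ from $\widetilde{D}_6$, the $\mathbb{Z}/6\mathbb{Z}$-action (which fixes $H$ and $\widetilde{D}_6$ and permutes the $\Delta_i$'s, respectively the $J_i$'s) to handle items (3) and (4), and degree computations modulo $3$, such as $2=3(3a+2b+6c+3d)$ for $M_1$.

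There is also a conceptual error in your description of the passage from $Y'$ to $Y_6$: Grothendieck--Lefschetz on the complete intersection $Y'$ controls the Picard group of \emph{Cartier} divisors, not the Weil divisor class group, so it is false that "all ten candidate classes are $\mathbb{Q}$-proportional" on $Y'$. On the contrary, Lemma \ref{lem_36nodes} of the paper shows $\widetilde{D}_6$ is not $\mathbb{Q}$-equivalent to any multiple of $H$ (the relation $\widetilde{D}_6=aH$ would force $6=9a$) --- this is precisely why $Y'$ must be singular. The nine extra classes already live in the class group of $Y'$; the small resolution merely makes them Cartier. This misreading does not by itself sink your strategy, but it feeds the mistaken expectation that independence is created by, and detectable purely from, relations among the exceptional curves. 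Finally, your concern about integral generation versus a finite-index sublattice is legitimate --- the paper's own proof only establishes independence --- but your proposed discriminant comparison is likewise left unexecuted, so it does not close that gap either.
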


\proof To begin with, we recall that the threefold in complex projective space ${\mathbb P}^5$ is singular and contains some ${\mathbb Q}$-Cartier divisors. This is not the case for $H$ and this is our first divisor.

Since $Y_6$ is a Calabi-Yau manifold, the Picard number of $Y_6$ is equal to $h^{1,1}(Y_6)=10$. Hence, it suffices to prove that the divisors in the claim are independent. First of all, we claim that $\tilde{D}_6$ is rigid. Let $D$ be a divisor such that ${\rm dim}H^2(D, {\mathcal O}_{D})=0$ and consider the exact sequence
$$
0 \rightarrow {\mathcal O}_{Y_6}(-D) \rightarrow  {\mathcal O}_{Y_6} \rightarrow  {\mathcal O}_{D}\rightarrow 0,
$$
and the long exact sequence in cohomology
$$
\rightarrow H^2(D, {\mathcal O}_{D}) \rightarrow H^3(Y_6, {\mathcal O}_{Y_6}(-D_6)) \rightarrow H^3(Y_6,  {\mathcal O}_{Y_6}) \cong {\mathbb C} \rightarrow 0.
$$

Since $Y_6$ is a Calabi-Yau manifold by the property satisfied by $D$, we get
\[ H^0(Y_6, {\mathcal O}_{Y_6}(D)) \cong {\mathbb C}.
\]
 This proves that any divisor in $Pic(Y_6)$ as above is rigid. In particular, we get that $H$ and $\widetilde{D}_6$ are linearly independent.

Notice that all the other divisors listed in the statement are $\QQ$-Cartier except $H$. By direct computation, they are not numerically equivalent; hence they are not linearly equivalent.

Recall that we have an induced action of ${\mathbb Z}/6{\mathbb Z}$ on the Picard group. Suppose, now, that the divisor in (3) corresponding to $\Delta_1$ is a linear combination of the divisors defined in (1) and (2). This is in fact impossible because the cyclic action, which is induced by the order $6$ automorphism on $Y_6$, leaves $H$ and $\widetilde{D}_6$ invariant and maps $\Delta_1$ to $\Delta_3$. By the configuration given in Figure 1 and the intersection with the other divisors, it is easy to check that they are not numerically equivalent.

Suppose, now, that the divisor in (3) corresponding to $\Delta_2$ is a linear combination of that corresponding to $\Delta_1$ and the divisors in (1) and (2). By applying the ${\mathbb Z}/6{\mathbb Z}$-action as before, this is not possible; hence the divisors in (1), (2) and (3) are linearly independent. Finally, notice that the same argument holds for $\Delta_3$, as well as for $J_1$, $J_2$ and $J_3$.

Next, notice that $M$ and $N$ are invariant under ${\mathbb Z}/6{\mathbb Z}$. First of all, write $M=M_1+M_2$ and write $M_1$ as a linear combination of the four known invariant divisors: $H$, $\tilde{D}_6$, $\Delta:=\Delta_1 + \Delta_2 + \Delta_3$ and $J:=J_1+J_2+J_3$, namely:
$$
M_1=aH+b\tilde{D}_6+c\Delta+dJ, \quad a,b,c,d \in \ZZ.
$$

If we multiply both sides by $H^2$, we get the following equality $2=3(3a+2b+6c+3d)$, which is clearly impossible. Finally, for $N$ recall the argument at the beginning of this proof.

\endproof

\begin{rem}
Notice that the threefold $Y_6$ is simply connected. Indeed, the fundamental group of $Y_6$ is the same as that of $Y'$ because  $Y_6$ is a small resolution of it. Moreover, the fundamental group of $Y'$ is trivial because it is a complete intersection in $\mathbb{P}^5$ - apply Lefschetz Hyperplane Theorem.

\end{rem}

\begin{lem}\label{lem_casoZ6_can_form} The induced $\ZZ/6\ZZ$ action on $H^{3,0}(Y_6)$ transforms $\omega_{Y^{\prime}} \mapsto -\omega_{Y^{\prime}}$.
\end{lem}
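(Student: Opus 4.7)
\noindent\emph{Proof proposal.} The plan is to realise the canonical form of $Y_6$ via the Poincar\'e residue coming from the complete intersection $Y^{\prime}$ and to compute the action of $\sigma^{*}$ directly on that explicit representative. Since $Y_6 \to Y^{\prime}$ is a small crepant resolution and the $\ZZ/6\ZZ$-action on $Y^{\prime}$ lifts to $Y_6$ by Proposition~\ref{cor_resCYproj}, pull-back gives a $\sigma$-equivariant isomorphism $H^{3,0}(Y_6)\cong H^0(Y^{\prime},\omega_{Y^{\prime}})$, so it suffices to compute the action of $\sigma^{*}$ on the generator
\[
\omega_{Y^{\prime}} \;=\; \mathrm{Res}_{Y^{\prime}}\frac{\Omega_5}{A^{\prime}_1\, A^{\prime}_2}, \qquad \Omega_5 \;=\; \sum_{i=0}^{5}(-1)^i v_i\, dv_0\wedge\cdots\widehat{dv_i}\cdots\wedge dv_5.
\]

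The computation splits into two independent pieces. First, by \eqref{eq_Z6Action} the automorphism $\sigma$ permutes the coordinates through the $6$-cycle $\pi=(0\,3\,5\,1\,4\,2)$; because $\Omega_5$ is the standard projective volume form and transforms under any coordinate permutation by its sign, we get $\sigma^{*}\Omega_5 = \mathrm{sgn}(\pi)\,\Omega_5 = -\Omega_5$. Second, because $\sigma$ preserves $Y^{\prime}$, both cubics $\sigma^{*}A^{\prime}_1$ and $\sigma^{*}A^{\prime}_2$ lie in the linear span of $A^{\prime}_1,A^{\prime}_2$, so there is a $2\times 2$ scalar matrix $M$ with
\[
\sigma^{*}\begin{pmatrix}A^{\prime}_1 \\ A^{\prime}_2\end{pmatrix}=M\begin{pmatrix}A^{\prime}_1 \\ A^{\prime}_2\end{pmatrix}.
\]
Substituting $v_i\mapsto \sigma^{*}v_i$ monomial by monomial in each $F_j$ (and thence in each $A^{\prime}_i$) should give $\sigma^{*}A^{\prime}_1 = -A^{\prime}_2$ and $\sigma^{*}A^{\prime}_2 = A^{\prime}_1 - A^{\prime}_2$, so $\det M = 1$. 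Invoking the standard transformation law for Poincar\'e residues under a change of defining equations then yields
\[
\sigma^{*}\omega_{Y^{\prime}} \;=\; \frac{\mathrm{sgn}(\pi)}{\det M}\,\omega_{Y^{\prime}} \;=\; -\omega_{Y^{\prime}}.
\]

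The main subtlety I anticipate is careful bookkeeping of signs: the cycles on the $F_j$ and $Q_i$ recorded in the proof of Lemma~\ref{lem_72nodes} track the action of $\sigma$ only up to sign (they describe the action on zero loci rather than on polynomials), whereas the residue formula needs the exact polynomial identity $\sigma^{*}A^{\prime}_i = \sum_j M_{ij}A^{\prime}_j$. I would therefore redo the substitution $v_i\mapsto v_{\pi(i)}$ in each of $F_1,\ldots,F_7$ directly, collect the signs, and only then plug into the definitions of $A^{\prime}_1,A^{\prime}_2$ to read off $M$. As a consistency check, $(-1)^6 = 1$ is compatible with $\sigma$ having order $6$, and the induced character $\chi(\sigma^k) = (-1)^k$ is precisely what is needed to make the Borcea--Voisin quotient $(Y_6 \times E_2)/\langle \sigma^3 \times \iota_E\rangle$ Calabi--Yau, as in the second part of Theorem~\ref{theo:main}.
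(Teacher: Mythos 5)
Your proof is correct, and it takes a genuinely different route from the paper's. The paper argues locally: it diagonalizes $\sigma$ in suitable homogeneous coordinates, writes $\omega_{Y^{\prime}}$ in the chart $U_0$ via the Gelfand--Leray expression \eqref{eq_CanForm}, applies the affine substitution \eqref{affinesigma}, and verifies $\sigma^{*}\omega_{Y^{\prime}}=-\omega_{Y^{\prime}}$ by a \texttt{MAGMA} computation. You instead stay global and reduce the lemma to two determinants: $\sigma^{*}\Omega_5=\mathrm{sgn}(\pi)\Omega_5=-\Omega_5$ since $\pi=(0\,3\,5\,1\,4\,2)$ is a $6$-cycle, and $\det M$ for the induced action on the pencil $\langle A^{\prime}_1,A^{\prime}_2\rangle$, so that the residue is multiplied by $\mathrm{sgn}(\pi)/\det M$. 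The step you flagged with ``should'' does come out exactly as you predict: with the pullback convention $\sigma^{*}v_i=v_{\pi(i)}$ one finds $\sigma^{*}F_1=-F_2$, $\sigma^{*}F_2=-F_5$, $\sigma^{*}F_3=-F_1$, $\sigma^{*}F_5=-F_7$, $\sigma^{*}F_6=-F_3$, $\sigma^{*}F_7=-F_6$ --- confirming your warning that the cycle recorded in the proof of Lemma \ref{lem_72nodes} holds only up to sign (and, under this convention, is traversed in the opposite direction) --- whence $\sigma^{*}A^{\prime}_1=-A^{\prime}_2$, $\sigma^{*}A^{\prime}_2=A^{\prime}_1-A^{\prime}_2$ and $\det M=1$; with the inverse convention the matrix becomes $\left(\begin{smallmatrix}-1 & 1\\ -1 & 0\end{smallmatrix}\right)$, again of determinant $1$, so the conclusion is insensitive to this bookkeeping choice. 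What your route buys is a conceptual, fully hand-checkable proof (no computer algebra, no local diagonalization), which moreover explains \emph{why} the sign is $-1$: an odd coordinate permutation combined with a unimodular action on the defining pencil. What the paper's route buys is the explicit chart expression \eqref{eq_CanForm} for the $3$-form, which it reuses verbatim to treat the $\ZZ/3\ZZ$-action on $Y^{\prime\prime}$ in Section 5. Two minor quibbles: your $M$ is a constant matrix, not a ``scalar matrix''; and the identification $H^{3,0}(Y_6)\cong H^0(Y^{\prime},\omega_{Y^{\prime}})$ you invoke is indeed $\sigma$-equivariant because the resolution is small and is chosen $\ZZ/6\ZZ$-equivariantly (Proposition \ref{cor_resCYproj}), a point worth stating just as you did since the lemma is phrased on $Y_6$ but computed on $Y^{\prime}$.
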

\begin{proof}
Locally, the automorphism $\sigma|_{Y^{\prime}}$ can be diagonalized. Introduce new coordinates $(u_0,u_1,u_2,u_3,u_4,u_5)$ in which $\sigma$ is of the form
\[{\rm Diag}(\zeta_{6}, \zeta^5_{6},-1,\zeta^4_{6},1,\zeta^2_{6}).
\]
In the chart $U_0:=\{u_0 \neq 0\}$ with affine coordinate $y_i=u_i/u_0$, the $3$-form $\omega_{Y^{\prime}}$ is given by

\begin{equation}\label{eq_CanForm}
\omega_{Y^{\prime}}|_{U_0}= \frac{dy_{3}\wedge dy_4 \wedge dy_5}{{\rm det}(\partial A_i/\partial y_j)},
\end{equation}
where $i,j=1,2$.  By direct computation, we have
\begin{equation*}
\begin{split}
{\rm det}(\partial A_i/\partial y_j) = & (288\zeta_{6} - 144)y_1^2y_3y_5 + (576\zeta_{6} - 288)y_1^2y_4^2 + (-576\zeta_{6} + 288)y_1y_2y_3^2 + \\
+ & (576\zeta_{6} - 288)y_1y_2y_4y_5 + (-288\zeta_{6} + 144)y_1y_3y_4 + (288\zeta_{6} - 144)y_1y_5^2 + \\
+ & (1152\zeta_{6} - 576)y_2^2y_3y_4 + (576\zeta_{6} - 288)y_2^2y_5^2 + (-288\zeta_{6} + 144)y_2y_3y_5 + \\
+ & (1152\zeta_{6} - 576)y_2y_4^2 + (-288\zeta_{6} + 144)y_3^2 + (-576\zeta_{6} + 288)y_4y_5
\end{split}
\end{equation*}

By definition, the automorphism $\sigma$ maps $U_0$ to $U_0$ in the following way:
\begin{equation}\label{affinesigma}
(y_1,y_2,y_3,y_4,y_5) \rightarrow (\zeta^2_{6}y_1,\zeta^4_6y_2, -y_3, \zeta_6y_4, \zeta^ 5_6y_5).
\end{equation}

If we apply \eqref{affinesigma} to $\omega_{Y'}|_{U_0}$, an easy computation by \verb|MAGMA| (see the script \verb|CoeffOmega|) yields $\sigma^*\omega_{Y'}=-\omega_{Y'}$.

\end{proof}

\begin{rem}\label{rem:fixlocusz/6}
Another \verb|MAGMA| computation (see the script \verb|FixLocus|) shows that the fixed locus of the action of $\sigma$ on $Y^{\prime}$ consists of $6$ isolated points:
\begin{equation*}
\begin{split}
(1 : 1 : 1 : 1 : 1 : 1), \quad &
(1 : -1 : \zeta^5_6: \zeta_6 : \zeta^4_6 : \zeta^2_6),
\\
(1 : 1 : \zeta^4_6 : \zeta^2_6 : \zeta^2_6 : \zeta^4_6),\quad &
(\zeta^2_6 : \zeta^5_6 : -1 : \zeta_6 : \zeta^4_6 : 1),
\\
(1 : 1 : \zeta^2_6 : \zeta_6^4 : \zeta_6^4 : \zeta^2_6),\quad &
(1 : -1 : -1 : -1 : 1 : 1).
\end{split}
\end{equation*}
\end{rem}

\begin{rem}\label{rem:fixlocusz/2}  If we take into account $\sigma^3:=\tau$, we have a $<\tau>\, \cong \ZZ/2\ZZ$ action on $Y^{\prime}$, which acts as $\omega_{Y^{\prime}} \mapsto - \omega_{Y^{\prime}}$ and leaves the del Pezzo surface $\tilde{D}_6$ invariant. The fixed locus w.r.t $\tau$ consists of a $\PP^2$ and $9$ points:
\[
p_1=(0 : 0 : 1 : 0 : 0 : 1), \, p_2=(-1 : -1 : 1 : -1 : -1 : 1, ), \,
p_3=(-1 : -1 : 1 : 1 : 1 :
1),
\]
\[p_4=(1 : 1 : 1 : -1 : -1 : 1), \,  p_5=(1 : 1 : 1 : 1 : 1 : 1),  \, p_6=(\zeta^4_6 : \zeta^4_6 : 1 : \zeta^2_6 : \zeta^2_6 : 1),
\]
\[
p_7=(\zeta^2_6 : \zeta^2_6 : 1 : \zeta^4_6 : \zeta^4_6 : 1), \quad p_8=(0 : 0 : 0 : 1 : 1 : 0), \quad p_9=(1 : 1 : 0 : 0 : 0 : 0).
\]
The $\PP^2$ contains $12$ of the singular points of $Y^{\prime}$, specifically two in each set $P_i$ for $i \in \{1, \ldots ,6\}$.  Moreover, the points $p_1, \, p_2, \, p_3$, and $p_4$ are contained in the del Pezzo surface $\tilde{D}_6$.
\end{rem}

\section{A Calabi-Yau threefold with $36$ nodes \\ and a $(\ZZ/3\ZZ)$-symmetry}

Let us now consider another Calabi--Yau threefold $Y^{\prime \prime}$ containing $\tilde{D}_6$; it is given by the complete intersection of
\begin{equation*}\label{eq_CY_perZ3}
\begin{split}
A_1^{\prime\prime}:= &-2v_0^2v_4 - v_0^2v_5 - v_0v_4^2 - 3v_0v_4v_5 - 2v_0v_5^2 + 2v_1^2v_2 + v_1^2v_3
+ v_1v_2^2 + 3v_1v_2v_3 +\\
+ & 2v_1v_3^2
+  2v_2^2v_3 + v_2v_3^2 - 2v_4^2v_5 - v_4v_5^2,
\end{split}
\end{equation*}
\begin{equation*}
\begin{split}
A_2^{\prime\prime}:= & 5\zeta_{12}^2v_0^2v_4 - 4v_0^2v_5 + 4\zeta_{12}^2v_0v_4^2 - 5v_0v_5^2 + (5\zeta_{12}^2 - 5)v_1^2v_2 + (4\zeta_{12}^2 -
    4)v_1^2v_3 - \\
 + &   4\zeta_{12}^2v_1v_2^2 + 5v_1v_3^2 -
 + 5\zeta_{12}^2v_2^2v_3 + 4v_2v_3^2 + (-5\zeta_{12}^2 +
    5)v_4^2v_5 + (-4\zeta_{12}^2 + 4)v_4v_5^2.
 \end{split}
\end{equation*}

\begin{lem}\label{lem_36nodesback} The complete intersection $Y^{\prime\prime}$ defined above has $36$ ordinary double points. For each of them there exists a smooth surface contained in $Y^{\prime \prime}$.
\end{lem}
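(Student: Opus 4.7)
The plan is to mimic the strategy used for Lemmas \ref{lem_36nodes} and \ref{lem_72nodes}, but applied to this new pair of cubics. First I would verify that $\tilde{D}_6\subset Y^{\prime\prime}$ by expressing $A_1^{\prime\prime}$ and $A_2^{\prime\prime}$ as $\QQ(\zeta_{12})$-linear combinations of the seven cubics $F_1,\ldots,F_7$ from Proposition \ref{prop_7cubics}. A direct inspection of the monomials appearing in $A_i^{\prime\prime}$ (for instance $v_0^2v_4$, $v_1v_2^2$, $v_0v_5^2$, \ldots) matches exactly the monomials occurring in the $F_j$'s, so a short \verb|MAGMA| check (by reducing $A_i^{\prime\prime}$ modulo the ideal generated by $F_1,\ldots,F_7$) confirms containment. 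In particular $\tilde{D}_6\subset Y^{\prime\prime}$, and the Picard-number argument already used in the proof of Lemma \ref{lem_36nodes} (the class of $\tilde{D}_6$ cannot be an integral multiple of a hyperplane class because $6\ne 9a$) forces $Y^{\prime\prime}$ to be singular.

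Next I would compute the singular locus of $Y^{\prime\prime}$ by forming the $2\times 6$ Jacobian of $(A_1^{\prime\prime},A_2^{\prime\prime})$ and saturating the ideal generated by its $2\times 2$ minors together with $A_1^{\prime\prime},A_2^{\prime\prime}$ with respect to the irrelevant ideal. A \verb|MAGMA| calculation should show this scheme is zero-dimensional of length $36$, consisting of $36$ distinct geometric points (after extending scalars if necessary). To verify the ODP property at each point $p$, I would work in an affine chart, use the implicit function theorem to eliminate one coordinate (using a non-vanishing partial derivative of the pair $(A_1^{\prime\prime},A_2^{\prime\prime})$ after a suitable coordinate change) so that locally $Y^{\prime\prime}$ is cut out by a single analytic equation, and then check that the Hessian of this equation at $p$ has rank $4$. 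Equivalently, one can truncate $A_1^{\prime\prime}-A_1^{\prime\prime}(p)$ and $A_2^{\prime\prime}-A_2^{\prime\prime}(p)$ to their quadratic parts, check that the linear parts span a rank-$2$ subspace (which they fail to do at $p$), and verify that the induced quadric on the tangent cone has maximal rank. This is mechanical once the $36$ coordinates have been found.

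Finally, for the existence of a smooth surface through each node I would argue exactly as at the end of the proof of Lemma \ref{lem_72nodes}. A subset of the $36$ nodes lies on $\tilde{D}_6$ itself; since $\tilde{D}_6$ is smooth and contained in $Y^{\prime\prime}$, it provides the required surface at those points. For the remaining nodes, I would invoke \cite[Proposition 4.1]{PS74} to produce three cubic polynomials in the ideal of $\tilde{D}_6$ whose scheme-theoretic intersection with $Y^{\prime\prime}$ splits as $\tilde{D}_6$ together with a residual surface smooth at the chosen node; alternatively, intersecting $Y^{\prime\prime}$ with a suitable $\ZZ/3\ZZ$-invariant quadric (analogous to the $\Delta_i,J_i,K_i$ of Remark \ref{rem_surfaces}) yields reducible components, one of which is smooth at the prescribed node. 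The main obstacle is a purely computational one: carrying out the $36$-point ODP verification and pinning down the explicit residual smooth surfaces requires organizing the \verb|MAGMA| scripts carefully over the extension $\QQ(\zeta_{12})$, but no new geometric idea beyond what was already used for $Y'$ is needed.
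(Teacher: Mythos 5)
Your proposal is correct and takes essentially the same route as the paper, whose entire proof is the single sentence that the argument is ``the same as that of Lemma~\ref{lem_36nodes}, verified by \verb|MAGMA| computation'' --- i.e., exactly what you spell out: containment of $\tilde{D}_6$ via the $F_j$'s, the Picard-number contradiction forcing singularity, and a machine check that the singular scheme consists of $36$ ordinary double points. The only simplification you could not know in advance is that all $36$ nodes of $Y^{\prime\prime}$ lie on $\tilde{D}_6$ (as stated in the proof of Corollary~\ref{cor:hodge}), so the del Pezzo surface itself is the smooth surface through every node and your \cite{PS74} fallback for ``remaining nodes'' is never needed.
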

\begin{proof} The proof is the same as that of Lemma \ref{lem_36nodes}, we verify it by \verb|MAGMA| computation.
\end{proof}

\begin{cor}\label{cor:hodge} There exists a projective crepant resolution $Y_3 \rightarrow Y^{\prime \prime}$ of $Y^{\prime \prime}$. It is a Calabi-Yau with the following Hodge numbers: $h^{1,1}(Y_3)=2$, $h^{1,2}(Y_3)=38$.
\end{cor}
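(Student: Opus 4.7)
The plan is to follow the same strategy used for $Y_6$ in Proposition \ref{cor_resCYproj}, adapted to the smaller number of nodes and to the fact that the equality $h^{1,1}=h^{2,1}$ no longer holds. Existence of a small projective crepant resolution $Y_3 \to Y^{\prime\prime}$ follows directly from Lemma \ref{lem_36nodesback} (36 ordinary double points, each sitting on a smooth surface in $Y''$) together with \cite[Theorem 1.8]{M05}. Smallness gives $K_{Y_3}\cong \mathcal{O}_{Y_3}$, and the Calabi--Yau property is completed by the Lefschetz hyperplane theorem applied to the complete intersection $Y''\subset\PP^5$ (giving $h^{1,0}(Y_3)=0$) and Serre duality (giving $h^{2,0}(Y_3)=0$).

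To compute $h^{2,1}(Y_3)=38$ I would mirror the argument used for $Y_6$: by \cite[Proposition 7.3]{BN13}, this number equals $\dim \ker \psi$, where
\[
\psi \colon \mathrm{Ext}^1(\Omega^1_{Y''}, \mathcal{O}_{Y''}) \longrightarrow H^0(Y'', \mathcal{E}xt^1(\Omega^1_{Y''}, \mathcal{O}_{Y''})),
\]
and $\mathrm{Ext}^1$ is represented as the cokernel of the Jacobian map $H^0(\mathcal{O}_{Y''}(1))^6 \to H^0(\mathcal{O}_{Y''}(3))^2$ associated with $(A_1'', A_2'')$. An element $(g_1,g_2)$ of this cokernel lies in $\ker\psi$ precisely when the $2 \times 2$ minors of the augmented Jacobian matrix (with an extra column $(g_1,g_2)$) vanish at each of the $36$ nodes. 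After computing explicit coordinates of the singular points over a suitable extension of $\mathbb{Q}(\zeta_{12})$, a \verb|MAGMA| script modelled on the one used for $Y_6$ solves the resulting linear system and yields $\dim \ker \psi = 38$.

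To recover $h^{1,1}(Y_3)=2$ I would invoke the standard defect formula for a small resolution $Y \to Y'$ of a nodal Calabi--Yau threefold smoothing to a Calabi--Yau $Y_0$:
\[
h^{1,1}(Y)=h^{1,1}(Y_0)+d, \qquad h^{2,1}(Y)=h^{2,1}(Y_0)-(k-d),
\]
where $k$ is the number of nodes and $d$ their defect. A smooth $(3,3)$-complete intersection in $\PP^5$ has $h^{1,1}=1$ and $h^{2,1}=73$ (the latter follows from $\chi=-144$, computed as $\int_X c_3$ using the Euler sequence and $N_{X/\PP^5}=\mathcal{O}(3)^{\oplus 2}$). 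With $k=36$ and $h^{2,1}(Y_3)=38$ one reads off $d=1$, giving $h^{1,1}(Y_3)=2$. Alternatively, one can display the Picard group as spanned by $H$ and the strict transform of $\tilde{D}_6$, which are linearly independent by the same $H^2$-intersection argument as in Proposition \ref{pic72}.

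The main obstacle is the explicit computation of $\dim\ker\psi$: the coordinates of the $36$ nodes lie in an extension of $\mathbb{Q}(\zeta_{12})$, and evaluating the $2\times 2$ minors symbolically at each node and extracting a basis of the kernel is the nontrivial \verb|MAGMA| step, exactly as in the $Y_6$ case. Everything else (existence of the resolution, vanishing of $h^{1,0}$ and $h^{2,0}$, the value of $h^{1,1}$) follows from standard Lefschetz/Serre duality arguments, the defect formula, and the small-resolution criterion of \cite{M05}, all of which have already been put to use in Proposition \ref{cor_resCYproj}.
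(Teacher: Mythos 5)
Your proposal is correct. The parts concerning the existence of the resolution and the Calabi--Yau property coincide with the paper's: Lemma \ref{lem_36nodesback} together with \cite[Theorem 1.8]{M05} gives the small projective crepant resolution, and Lefschetz plus Serre duality give the vanishing of $h^{1,0}$ and $h^{2,0}$, exactly as in Proposition \ref{cor_resCYproj}. The difference lies in how the two Hodge numbers are pinned down. The paper's own proof is a two-line sketch: it says to ``proceed similarly'' to Proposition \ref{cor_resCYproj} and then justifies $h^{1,2}(Y_3)=38$ by a parameter count --- all $36$ nodes lie on $\tilde{D}_6$, so the construction depends on $38$ parameters --- while $h^{1,1}(Y_3)=2$ is asserted without an explicit derivation (the shortcut $h^{1,1}=h^{2,1}$ from \cite{Dimca}, used for $Y_6$, is unavailable here since $2\neq 38$). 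You instead rerun the kernel-of-$\psi$ computation of \cite{BN13} via \verb|MAGMA| for $h^{2,1}$, which is a legitimate reading of ``proceed similarly'' and is more systematic and verifiable than the parameter count, and you then recover $h^{1,1}(Y_3)=2$ from the defect relations
\begin{equation*}
h^{1,1}(Y_3)=h^{1,1}(Y_0)+d, \qquad h^{2,1}(Y_3)=h^{2,1}(Y_0)-k+d,
\end{equation*}
with $(h^{1,1}(Y_0),h^{2,1}(Y_0))=(1,73)$ for the smooth $(3,3)$ complete intersection and $k=36$, giving $d=1$. This defect argument does not appear in the paper and is a genuine gain: it explains where $h^{1,1}=2$ comes from, it is consistent with the paper's own data (applied to $Y_6$ with $k=72$ it gives $d=9$ and reproduces $(h^{1,1},h^{2,1})=(10,10)$, and $73$ is precisely the dimension of the deformation space appearing in the proof of Proposition \ref{cor_resCYproj}), and your fallback argument --- that $H$ and the strict transform of $\tilde{D}_6$ are independent, by the same intersection computation as in Proposition \ref{pic72} --- matches the paper's implicit reasoning. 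The only hypothesis to keep in mind is that the defect relations require $Y^{\prime\prime}$ to admit both a small resolution and a smoothing; the latter is automatic since $Y^{\prime\prime}$ is a complete intersection, so there is no gap.
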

\begin{proof} For the proof, we proceed similarly as in Proposition \ref{cor_resCYproj}. All the $36$ nodes are on the del Pezzo surface, so the construction depends on $38$ parameters, which yields $h^{1,2}(Y_3)=38$.
\end{proof}

\begin{lem} There is a $\ZZ/3\ZZ$ action on $Y^{\prime \prime}$ such that $\omega_{Y^{\prime \prime}} \mapsto \zeta^2_3 \omega_{Y^{\prime \prime}}$, and leaves the del Pezzo surface $\tilde{D}_6$ invariant.
\end{lem}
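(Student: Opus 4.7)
The strategy is to mimic verbatim the argument of Lemma \ref{lem_casoZ6_can_form}: first exhibit a concrete linear action, then verify invariance of the defining ideal via a \verb|MAGMA| computation, and finally read off the eigenvalue on the holomorphic 3-form using the Poincar\'e residue formula.

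First, I would propose the natural candidate $\rho := \sigma^2$, where $\sigma$ is the order-6 automorphism of $\PP^5$ introduced in Lemma \ref{lem_72nodes}. Since $\sigma$ arises from the cyclic permutation $(1,2,3)\in\mathfrak{S}_3$ composed with the Cremona involution, its square is just the order-3 cyclic permutation sitting inside $\mathfrak{S}_3\subset \Aut(D_6)$; hence it preserves $\tilde D_6$ automatically. Explicitly, on $\PP^5$ this is the monomial action $v_0\mapsto v_5,\; v_1\mapsto v_2,\; v_2\mapsto v_3,\; v_3\mapsto v_1,\; v_4\mapsto v_0,\; v_5\mapsto v_4$. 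Under $\rho$, the cubics $F_i$ permute in two orbits of length three (together with the fixed cubic $F_4$) and the quadrics $Q_j$ are permuted as well.

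Next, I would verify that $\rho$ descends to $Y^{\prime\prime}$ by checking that the ideal $(A_1^{\prime\prime}, A_2^{\prime\prime})$ is $\rho$-stable. This is exactly the feature for which the specific coefficients involving $\zeta_{12}$ were chosen: up to an elementary change of basis of the pencil, $\rho^*$ must act by a $2\times 2$ matrix of order $3$, and a short \verb|MAGMA| calculation produces this matrix and confirms the inclusion $\rho^* A_i^{\prime\prime}\in (A_1^{\prime\prime}, A_2^{\prime\prime})$ for $i=1,2$.

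Finally, to compute the eigenvalue on $\omega_{Y^{\prime\prime}}$, I diagonalize $\rho$ on $\PP^5$: in an eigenbasis $(u_0,\dots,u_5)$ it takes the form $\mathrm{Diag}(1,\zeta_3^{a_1},\dots,\zeta_3^{a_5})$ with $\sum a_i\equiv 0\pmod 3$. On the affine chart $U_0=\{u_0\neq 0\}$ with coordinates $y_i=u_i/u_0$, the Poincar\'e residue formula gives
\[
\omega_{Y^{\prime\prime}}|_{U_0}=\frac{dy_a\wedge dy_b\wedge dy_c}{\det(\partial A_i^{\prime\prime}/\partial y_j)},
\]
for a suitable choice of complementary indices, exactly as in \eqref{eq_CanForm}. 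Because $A_1^{\prime\prime},A_2^{\prime\prime}$ are only invariant \emph{as a pencil}, the determinant of the Jacobian is invariant up to the determinant of the $\rho$-matrix on $(A_1^{\prime\prime},A_2^{\prime\prime})$, so collecting all the weights (numerator, denominator, and change-of-pencil) produces the single character $\zeta_3^2$. This last verification is a direct adaptation of the \verb|CoeffOmega| script.

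The main obstacle is the second step: one must identify the precise $2\times 2$ transition matrix by which $\rho^*$ acts on the pencil $\langle A_1^{\prime\prime},A_2^{\prime\prime}\rangle$, since the symmetry of $(A_1^{\prime\prime},A_2^{\prime\prime})$ is not on the nose but only up to this mixing. Once this matrix is in hand, both the descent of $\rho$ to $Y^{\prime\prime}$ and the weight calculation on $\omega_{Y^{\prime\prime}}$ reduce to routine manipulations.
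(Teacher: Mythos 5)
Your proposal is correct and follows essentially the same route as the paper: it takes $\rho:=\sigma^2$ with the same explicit permutation of the $v_i$, checks invariance of $Y^{\prime\prime}$ by a \verb|MAGMA| computation, and reads off the eigenvalue on $\omega_{Y^{\prime\prime}}$ from the residue formula \eqref{eq_CanForm} exactly as in the script \verb|CoeffOmega|. Your explicit attention to the possible $2\times 2$ mixing of the pencil $\langle A_1^{\prime\prime},A_2^{\prime\prime}\rangle$ and its determinant contribution is a careful refinement of a point the paper leaves implicit in the phrase ``this action leaves $Y^{\prime\prime}$ invariant,'' not a different argument.
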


\begin{proof} We consider the subgroup of order $3$ of the group of automorphisms isomorphic to $\ZZ/6\ZZ$ defined in Lemma \ref{lem_casoZ6_can_form}, with action $\sigma^2:=\rho$ on $\PP^5$ given by
\[v_0 \mapsto v_5, \, v_1 \mapsto v_2,\, v_2 \mapsto v_3,\, v_3 \mapsto v_1,\, v_4 \mapsto v_0,\, v_5 \mapsto v_4.
\]
This action leaves $Y^{\prime\prime}$ invariant, i.e.,  $\rho(Y^{\prime\prime})=Y^{\prime\prime}$ . Equation \eqref{eq_CanForm} gives the canonical form of $Y^{\prime\prime}$ in the chart $U_0$. We calculate the pull-back $\rho^*\omega_{Y^{\prime\prime}}$  and prove the statement. This is done in the \verb|MAGMA| script \verb|CoeffOmega|.
\end{proof}

\begin{rem}\label{rem_fixZ3} By the \verb|MAGMA| script \verb|FixLocus|, it is easy to check that the fixed locus of the action of $\rho$ on $Y^{\prime\prime}$ consists of $9$ isolated points. Three of these points
\[
(1 : 1 : 1 : 1 : 1 : 1), \, (1 : -\zeta_{12}^2 : -\zeta_{12}^2 : -\zeta_{12}^2 : 1 : 1), \, (1 : \zeta_{12}^2 - 1 : \zeta_{12}^2
    - 1 : \zeta_{12}^2 - 1 : 1 : 1)
\]
belong to $\tilde{D}_6$.
\end{rem}


\section{Elliptically fibred Calabi--Yau fourfolds}


Let $E_{i}$ be an elliptic curve with $i=2,3$. It is well-known that every elliptic curve admits a hyperelliptic involution $\iota_E$ with $4$ fixed points and $\iota^*_E\omega_E=-\omega_E$. In what follows, by $E_2$ we mean such an elliptic curve together with the hyperelliptic involution.

We denote by $E_{3}$ the elliptic curve that admits a $<\rho_E> \cong \ZZ/3\ZZ$-action  with $3$ fixed points. It is known that $E_3$ is a $\ZZ/3\ZZ$-ramified cover of $\PP^1$. The induced action on the period of the elliptic curve is given by $\omega_{E_3} \mapsto \zeta_3 \omega_{E_3}$.

\begin{prop}\label{prop:y3e3z3} The  quotient $(Y_3 \times E_3)/(\rho \times \rho_E)$ admits a birational model $X_1$, which is a smooth Calabi--Yau fourfold with an elliptic fibration and contains a del Pezzo surface of degree $6$. Moreover, the degeneration locus of the fibration is a union of $9$ weak del Pezzo surfaces.
\end{prop}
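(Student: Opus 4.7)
The natural candidate for $X_1$ is a crepant resolution of the quotient $Q := (Y_3 \times E_3)/G$, where $G := \langle \rho \times \rho_E \rangle \cong \ZZ/3\ZZ$. The Calabi--Yau condition on $Q$ is automatic from the characters: $\rho^*\omega_{Y_3} = \zeta_3^2\,\omega_{Y_3}$ combined with $\rho_E^*\omega_{E_3} = \zeta_3\,\omega_{E_3}$ gives $(\rho\times\rho_E)^*(\omega_{Y_3}\wedge\omega_{E_3}) = \zeta_3^3\,\omega_{Y_3}\wedge\omega_{E_3} = \omega_{Y_3}\wedge\omega_{E_3}$, so this holomorphic $4$-form descends to a nowhere-vanishing section of $K_Q$ on the smooth locus of $Q$.

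The singular locus of $Q$ is $\mathrm{Fix}(\rho)\times\mathrm{Fix}(\rho_E)$, which by Remark~\ref{rem_fixZ3} and the three fixed points of $\rho_E$ on $E_3$ consists of $9\times 3 = 27$ isolated points. At each fixed point the tangent action on $T_pY_3$ is faithful, with eigenvalues $\zeta_3^{t_i}$ satisfying $\sum t_i\equiv 2\pmod 3$ (dictated by the character of $\rho$ on $\omega_{Y_3}$); the unique possibility with all $t_i\in\{1,2\}$ is $\tfrac{1}{3}(1,2,2)$ up to permutation. Combining with the tangent action $\tfrac{1}{3}(1)$ on $E_3$, each singularity of $Q$ is locally $\mathbb{C}^4/\tfrac{1}{3}(1,1,2,2)$, a Gorenstein terminal cyclic quotient. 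I would then appeal to the generalized Borcea--Voisin construction in the spirit of \cite{V93, Bo97, D12, CG13} (or build the local $G$-Hilbert resolution explicitly at each of the $27$ points) to resolve these singularities crepantly and simultaneously; the result is a smooth fourfold $X_1$ with trivial canonical class. The remaining Calabi--Yau vanishings $h^{p,0}(X_1)=0$ for $1\le p\le 3$ then follow from $h^{p,0}(X_1)=\dim H^{p,0}(Y_3\times E_3)^G$ (invariance under crepant resolution) together with a K\"unneth decomposition: every non-vanishing summand $H^{a,0}(Y_3)\otimes H^{b,0}(E_3)$ with $a+b\in\{1,2,3\}$ is one-dimensional and carries a non-trivial $G$-character ($\zeta_3^2$ or $\zeta_3$), so has no invariants, while $h^{4,0}(X_1)=1$ is witnessed by $\omega_{Y_3}\wedge\omega_{E_3}$.

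The elliptic fibration arises from the $\rho$-equivariant first projection $Y_3\times E_3\to Y_3$, which descends to $Q\to Y_3/\rho$ and lifts, after resolving the singular locus of the base, to $f\colon X_1\to B$ with generic fiber $E_3$. The del Pezzo surface of degree $6$ in $X_1$ is produced as follows: by the invariance of $\tilde{D}_6$ under $\rho$ established in Section~5, for any $q\in E_3\setminus\mathrm{Fix}(\rho_E)$ the surface $\tilde{D}_6\times\{q\}$ has $G$-orbit consisting of three disjoint copies cyclically permuted by $G$; their common image in $Q$ lies in the smooth locus and is isomorphic to $\tilde{D}_6$, lifting to a del Pezzo surface of degree $6$ inside $X_1$. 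For the degeneration locus of $f$: each of the $9$ singular points of $Y_3/\rho$ is locally of the form $\mathbb{C}^3/\tfrac{1}{3}(1,1,2)$ (terminal, non-Gorenstein), and the weighted blow-up of weights $(1,1,2)$ produces an exceptional divisor isomorphic to $\mathbb{P}(1,1,2)$, a weak del Pezzo surface of degree $8$. The fiber of $f$ degenerates precisely over these $9$ divisors in $B$, giving the claimed union of $9$ weak del Pezzo surfaces.

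The principal obstacle is the construction of the crepant resolution: the two non-trivial elements of $G$ act on $T_{(p,q)}(Y_3\times E_3)$ both with age $2$, so the McKay correspondence predicts \emph{no} age-$1$ divisorial contribution and the crepant resolution of $\mathbb{C}^4/\tfrac{1}{3}(1,1,2,2)$ must be small in the direction of the elliptic fiber. One must therefore exhibit such a small crepant resolution (as a $G$-Hilbert scheme, or via an explicit flop-type construction) and verify that after base change along $B\to Y_3/\rho$ the singular fibers of $f$ organize themselves exactly over the $9$ weak del Pezzo exceptional divisors in $B$, so that $f$ is an honest elliptic fibration with the stated degeneration locus.
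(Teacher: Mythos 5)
There is a genuine gap, and you have in fact named it yourself: your entire argument hinges on the existence of a crepant resolution of the $27$ points of type $\tfrac{1}{3}(1,1,2,2)$, and you leave this as ``the principal obstacle'' to be dealt with ``as a $G$-Hilbert scheme, or via an explicit flop-type construction.'' That existence claim is precisely the heart of the matter and cannot be outsourced: since these singularities are terminal (age $2$ for both non-trivial group elements, exactly as you compute), any crepant resolution must be small, and isolated fourfold quotient singularities in general admit \emph{no} such resolution --- the paper's own Proposition \ref{prop:y6e2} turns on the fact that $\tfrac{1}{2}(1,1,1,1)$ has none, and $G$-Hilb is not known to be smooth or crepant in dimension four. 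The paper's proof fills exactly this hole with an explicit local construction: it embeds the quotient $\CC^4/\tfrac{1}{3}(1,1,2,2)$ by fundamental invariants, projects to $\PP^8$ where the image is cut out by two sets of $2\times 2$ minors, and blows up $\PP^8$ along the $\PP^5$ given by $u_5=u_6=u_7=0$; the strict transform then carries a conic over the singular point, exhibiting a small (hence crepant) resolution $X_1'\to (Y_3\times E_3)/(\rho\times\rho_E)$. Without this (or an equivalent) explicit step your proposal does not prove the proposition.

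A second, smaller gap: you assert that the fibration ``lifts, after resolving the singular locus of the base, to $f\colon X_1\to B$.'' Resolving the base does not by itself turn the induced rational map from the small resolution into a morphism, and the paper is explicit that the naive fix (fiber product plus resolution) cannot be used because triviality of the canonical bundle might be lost. Instead it performs an \emph{ad hoc} sequence of further blow-ups --- first of the elliptic components of the nine singular fibers, then of nine elliptic curves $\Lambda$ sitting over the indeterminacy point of $\PP^2\dashrightarrow {\mathbb W}{\mathbb P}^2(1,1,2)$ --- checking at each stage that the canonical bundle stays trivial. In particular the birational model $X_1$ of the statement is \emph{not} the crepant resolution itself, as in your proposal, but this further blow-up; conflating the two is what lets you skip the verification that $f$ is an honest morphism with connected fibers onto $B$. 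Your character computation for $K_Q$, the Künneth argument for $h^{p,0}$, the survival of $\tilde{D}_6$ away from the singular locus, and the identification of the nine exceptional surfaces (${\mathbb W}{\mathbb P}^2(1,1,2)$, birational to ${\mathbb F}_2$) as weak del Pezzos all agree with the paper and are fine.
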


\begin{proof}
By Remark \ref{rem_fixZ3}, the quotient $(Y_3 \times E_3)/(\rho \times \rho_E)$ has $27$ isolated singularities of type $\frac{1}{3}(1,1,2,2)$. Notice that $\frac{1+1+2+2}{3}=2$; hence, by Reid-Tai criterion \cite[Section 4]{Rei87}, these singularities are terminal. Let $p$ one of these singularities, and choose an affine neighborhood $U \cong \CC^4$ of coordinates $x_1,x_2,x_3,x_4$ such that $p$ is the point $(0,0,0,0)$. Consider the projectivization of $U$ in $\PP^4$ with coordinates $[l_0:l_1:l_2:l_3:l_4]$. We compute the fundamental invariants of the cyclic singularity $\frac{1}{3}(1,1,2,2)$, and we define a map
\[ g\colon\PP^4 \longrightarrow W\PP^{12}[3,2,2,2,2,3,3,3,3,3,3,3,3]
\]
\[
[l_0:l_1:l_2:l_3:l_4] \mapsto [l_0^3:l_1^3:l_1^2l_2:l_2^2l_1:l_2^3:l_3^3:l_3^2l_4:l_4^2l_3:l_4^3].
\]
We consider the map $h\colon W\PP^{12}[3,2,2,2,2,3,3,3,3,3,3,3,3] \rightarrow \PP^8$, which is the projection that forgets the coordinates of weight $2$. Denote by $[u_0: \ldots :u_8]$ the coordinates of the image $\PP^8$. Since $deg(h|_{g(\PP^4)})=1$, we study the composition $f:=h \circ g$. The equation of $f(\PP^4)$ are given by
\[
-u_1u_3+u_2^2, -u_1u_4+u_2u_3,-u_2u_4+u_3^2,-u_5u_7+u_6^2, -u_5u_8+u_6u_7,
-u_6u_8+u_7^2.
\]
We blow up $\PP^8$ along the $\PP^5$ given by $u_5=u_6=u_7=0$. Thus, we introduce a $\PP^2$ with coordinate $[a_0:a_1:a_2]$. The strict transform of $f(\PP^4)$ is given by
\[-u_1u_3 + u_2^2, -u_1u_4 + u_2u_3, -u_2u_4 + u_3^2, a_1^2-a_2, -u_8 + u_5a_1a_2, -u_8a_1 + u_5a_2^2.
\]
Therefore, the conic $V(a_1^2-a_2=0)$, which is contained in the exceptional $\PP^2$, lies above the singular point $p$. Since everything is local and the singularities are isolated, we obtain a small resolution  $X'_1 \rightarrow(Y_3 \times E_3)/(\rho \times \rho_E)$. This proves that $X'_1$ has trivial canonical bundle.

Our choice of $Y_3$, $E_3$ and the action of $\ZZ/3\ZZ$ ensures that the Hodge numbers of $X'_1$ are $h^{0,0}(X'_1)=h^{4,0}(X'_1)=1$ and $h^{1,0}(X'_1)=h^{2,0}(X'_1)= h^{3,0}(X'_1)=0$.

The image of the del Pezzo surface $\tilde{D}_6 \subset Y_3$ in $(Y_3 \times E_3)/(\rho \times \rho_E)$ remains a del Pezzo surface. Moreover, let $e \in E_3$ be a point that is not fixed by $\rho_E$. In doing so, the singular points of the quotient $(Y_3 \times E_3)/(\rho \times \rho_E)$ are away from the copy of the del Pezzo contained in the fibre $Y_3 \times \{e\}$. Thus, there is a del Pezzo surface contained in $X'_1$, which is isomorphic to $\tilde{D}_6$.

The fourfold $X'_1$ admits a fibration $\varphi' \colon X'_1 \rightarrow Y_3/\ZZ/3\ZZ$ to a singular threefold, which is given by the composition of the crepant resolution and the quotient map. Let $B \rightarrow Y_3/\ZZ/3\ZZ$ be a resolution of the singularities of $Y_3/\ZZ/3\ZZ$. The threefold $B$ is obtained by blowing up the $9$ singular points of $Y_3/\ZZ/3\ZZ$, thus introducing $9$ weighted projective spaces ${\mathbb W}{\mathbb P}^2(1,1,2)$. Notice that each of them is birational to the Hirzebruch surface ${\mathbb F}_2$, which is a weak del Pezzo surface (see, for instance, \cite{D13}, p. 395).

The threefold $B$ is  smooth and of Kodaira dimension $\kappa(B)= -\infty$. Moreover, it contains a del Pezzo surface isomorphic to the blow up of $\tilde{D}_6$ at three points, i.e., a del Pezzo surface of degree $3$.

Let us consider the fibers of $\varphi'$ over the $9$ singular points of $Y_3/\ZZ/3\ZZ$. Since $Y_3 \times E_3$ is a product, these fibers consist of $4$ components: one of them is a (smooth) elliptic curve, and the remaining three are disjoint $\PP^1$'s, each of which intersects the elliptic curve in exactly one point.

Usually, at this stage one takes the fiber product and a resolution of it. Unfortunately, we are not able to guarantee that the resulting fourfold has trivial canonical bundle. Therefore, we make an {\em ad hoc} construction.

Now, we consider the nine singular fibers over the nine singular points in $Y_3/\ZZ/3\ZZ$ and for each of them we blow up the smooth elliptic component. Notice that the canonical bundle of the blown up variety $X''_1$ remains trivial because the top holomorphic form vanishes on a two-dimensional subvariety. The exceptional divisors of $X''_1$ are given by $9$ ${\mathbb P}^2$-bundles over $9$ disjoint elliptic curves. Clearly, we have an elliptic fibration on the complemement of the nine exceptional divisors in $X''_1$ because such a fibration is isomorphic to that given on the complement of the $9$ singular points in $Y_3/\ZZ/3\ZZ$.

Recall that the indeterminacy locus of a rational map has codimension at least two (see, for instance, \cite{GH}, p. 491); if not, the map is indeed a morphism. Therefore, it suffices to show that each exceptional divisor is mapped to a copy of weighted projective plane, and the fibers are connected. This will indeed yield a fibration onto $B$.

Let us focus on one of the exceptional divisors, and denote it by $T$. By construction of the blow up, we have a map $\pi\colon T \longrightarrow \mathbb {P}^2$, which exhibits $T$ as an elliptic fibration.

As well known, the weighted projective space ${\mathbb W}{\mathbb P}^2(1,1,2)$ is rational, so there exists a rational map from complex projective plane to it. Fix the one given by a parametrization of the quadric cone, which is a projective model of ${\mathbb W}{\mathbb P}^2(1,1,2)$. Such a parametrization is defined on the complement of a point $v$ in projective plane. If we blow up such a point in the plane, we get the Hirzebruch surface ${\mathbb F}_1$ and obtain the following diagram.
\begin{displaymath}
\xymatrix{
T' \ar[r] \ar[d]_{\pi'} & \mathbb{F}_1 \ar[d]_{\sigma} \ar[dr] &
\\
T \ar[r]_{\pi} & \PP^2 \ar@{.>}[r] & {\mathbb W}{\mathbb P}^2(1,1,2) .
}
\end{displaymath}

The preimage of $v$ on $T$ w.r.t. $\pi$ is an elliptic curve $\Lambda$.

For each of the $9$ exceptional divisors, let us blow up $X''_1$ along the elliptic curves $\Lambda$'s described above, so we obtain a smooth fourfold $X_1$, which is of Calabi-Yau type because of the codimension of the blown up loci. The exceptional divisors in $X_1$ are smooth threefolds $T'$ which map to the weighted projective plane ${\mathbb W}{\mathbb P}^2(1,1,2)$ as shown in the diagram above. By construction, the fibers of these maps are connected. Therefore, $X_1 \longrightarrow B$ is the elliptic fibration we were looking for.

\end{proof}

\begin{prop}\label{prop:y6e2} The  quotient $Z_{6,2}:=(Y_6 \times E_2)/(\sigma^3 \times \iota_E)$ admits a birational model $X_2$, which is a singular Calabi--Yau fourfold with $36$ singular points of type $\frac{1}{2}(1,1,1,1)$  and does not admit any crepant resolutions. Moreover, $X_2$ admits an elliptic fibration and contains a del Pezzo surface.
\end{prop}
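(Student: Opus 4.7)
The plan is to realise $X_2$ as a partial crepant resolution of $Z_{6,2}$ in which the codimension-two sheets of $A_1$-singularities are resolved, while the $36$ isolated quotient singularities remain because they are terminal of age $2$ and hence admit no crepant resolution.

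First I would identify the fixed locus of $\sigma^3\times\iota_E$ on $Y_6\times E_2$. By Remark \ref{rem:fixlocusz/2}, $\mathrm{Fix}(\sigma^3)\subset Y'$ consists of a $\PP^2$ (containing $12$ of the nodes, two from each $P_i$) and $9$ isolated points. The surfaces $\Delta_1,\Delta_2,\Delta_3$ blown up to produce $Y_6$ form a single $\sigma$-orbit of length $3$, so each is preserved by $\sigma^3$; a short \verb|MAGMA| check confirms that the strict transform of $\mathrm{Fix}(\sigma^3)$ in $Y_6$ is again the disjoint union of a smooth $\PP^2$ and $9$ smooth isolated points, with no new fixed components appearing on the exceptional locus. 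Combined with the four fixed points of $\iota_E$ on $E_2$, the fixed locus of the product involution is $4$ disjoint copies of $\PP^2$ together with $36$ isolated points.

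Next I would analyse the singularities of $Z_{6,2}$. Since $(\sigma^3)^*\omega_{Y_6}=-\omega_{Y_6}$ by Lemma \ref{lem_casoZ6_can_form} and $\iota_E^*\omega_{E_2}=-\omega_{E_2}$, the product involution preserves the holomorphic $4$-form $\omega_{Y_6}\wedge\omega_{E_2}$, so $Z_{6,2}$ has trivial dualising sheaf on its smooth locus. Near a smooth point of the fixed $\PP^2$ times a fixed point of $\iota_E$, one can choose local coordinates $(x_1,x_2,x_3,t)$ in which the involution acts as $(x_1,x_2,-x_3,-t)$, giving a transverse family of $A_1$-singularities along each of the four $\PP^2$'s. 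Near each isolated fixed point of $\sigma^3$, the differential must have eigenvalues $(-1,-1,-1)$ (since its determinant equals $-1$ and any $+1$ eigenvalue would contribute a positive-dimensional fixed component), so the full product action is $\mathrm{diag}(-1,-1,-1,-1)$, producing the $36$ cyclic quotient singularities of type $\frac{1}{2}(1,1,1,1)$.

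The third step is the crepant modification. Blowing up each of the four $\PP^2$'s inside $Z_{6,2}$ is exactly the fibrewise minimal resolution of a family of $A_1$ surface singularities; it introduces a $\PP^1$-bundle exceptional divisor above each surface and is crepant. Call the resulting fourfold $X_2$. Since the isolated fixed points of $\sigma^3$ are disjoint from the fixed $\PP^2$, the $36$ quotient singularities of type $\frac{1}{2}(1,1,1,1)$ persist in $X_2$; by the Reid--Tai criterion \cite[Section 4]{Rei87} they are terminal, and since $\frac{1+1+1+1}{2}=2>1$ every exceptional divisor over such a point has strictly positive discrepancy, so no crepant resolution of $X_2$ exists.

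Finally, the first projection $\mathrm{pr}_1:Y_6\times E_2\to Y_6$ is equivariant and descends to a fibration $X_2\to Y_6/\langle\sigma^3\rangle$ whose generic fibre is $E_2$, providing the required elliptic fibration. For the del Pezzo surface, choose $e\in E_2\setminus\mathrm{Fix}(\iota_E)$: then $\tilde{D}_6\times\{e\}$ is disjoint from every fixed component, maps isomorphically into $Z_{6,2}$, and lies entirely in the smooth locus of $X_2$, giving the required copy of $\tilde{D}_6$. The main obstacle I anticipate is the explicit verification (via the \verb|MAGMA| scripts) that no further fixed components of $\sigma^3$ appear on the exceptional locus of $Y_6\to Y'$, together with checking that the crepant blow-ups of the four $\PP^2$'s really preserve triviality of the canonical class globally, without inadvertently altering the isolated $\frac{1}{2}(1,1,1,1)$-singularities.
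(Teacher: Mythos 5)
Your proposal follows essentially the same route as the paper's proof: identify the fixed locus of $\sigma^3\times\iota_E$ via Remark \ref{rem:fixlocusz/2}, crepantly resolve the four codimension-two sheets of $A_1$-singularities (local type $(1,1,-1,-1)$), observe that the $36$ isolated singularities of type $\frac{1}{2}(1,1,1,1)$ are terminal by the Reid--Tai criterion and hence admit no crepant resolution, and obtain the elliptic fibration and the del Pezzo surface exactly as in Proposition \ref{prop:y3e3z3}. The paper is terser---it cites Reid--Tai rather than performing your explicit blow-up of the fixed surfaces, and it defers the fibration and del Pezzo arguments to Proposition \ref{prop:y3e3z3}---but the content coincides, including the point you rightly flag as the main thing to verify, namely that the fixed locus must be recomputed on the resolution $Y_6$ (where the exceptional curves over the $12$ nodes lying on the fixed plane could a priori contribute extra fixed components) and not merely on $Y'$.
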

\begin{proof} The fixed locus of the action $\sigma^3$ on $Y^{\prime}$ consists of a projective plane (containing $12$ singular points of $Y^{\prime}$) and $9$ points $p_1, \ldots ,p_9$, see Remark \ref{rem:fixlocusz/2}.

The singular $\mathbb{P}^2 \subset Y^{\prime}$ yields a codimension one subvariety at which $Y_6$ is smooth. The product of it with the Weierstrass points of the elliptic curve $E_2$ gives four codimension two subvarieties of $Z_{6,2}$. At each point of these subvarieties, the local action is of the form $(1,1,-1,-1)$; hence, by Reid--Tai criterion there exists a crepant resolution. On the other hand, the remaining singularities of the quotient are $36$ isolated singular points of type $\frac{1}{2}(1,1,1,1)$. It is well-known that such singularities do not admit a crepant resolution. The rest of the proof is analogous to that of Proposition \ref{prop:y3e3z3}.
\end{proof}


\bigskip

\bigskip

\textbf{Gilberto Bini}, \textbf{Matteo Penegini}

\medskip

Dipartimento di Matematica \emph{``Federigo Enriques''}, Universit\`{a} degli Studi di Milano, Via Saldini 50, 20133 Milano, Italy \\

\emph{E-mail addresses:}\\
\verb|gilberto.bini@unimi.it| \\
 \verb|matteo.penegini@unimi.it|  \\

\end{document}